\tikzset{>=stealth',
  head/.style = {fill = white, text=black}, 
  pil/.style={->,thick},
  und/.style={thick},
  junct/.style = {draw,circle,inner sep=0.5pt,outer sep=0pt, fill=black}
  }
\newtheorem{theorem}{Theorem}[section]
\newtheorem{proposition}[theorem]{Proposition}
\newtheorem{lemma}[theorem]{Lemma}
\newtheorem*{claim*}{Claim}
\newtheorem{corollary}[theorem]{Corollary}
\newtheorem{conjecture}[theorem]{Conjecture}
\newtheorem{definition}[theorem]{Definition}
\theoremstyle{remark}
\newtheorem{example}[theorem]{Example}
\newtheorem{remark}[theorem]{Remark}
\numberwithin{equation}{section}
\newcommand{\excise}[1]{}
\definecolor{qqffqq}{rgb}{0,0.87,0} 
\definecolor{ttfftt}{rgb}{0,0.87,0} 
\definecolor{zzqqzz}{rgb}{0.5,0,0.5} 
\definecolor{ccwwff}{rgb}{0.5,0,0.5} 
\definecolor{cccccc}{rgb}{0.67,0.67,0.67} 
\definecolor{uququq}{rgb}{0,0,0} 
\definecolor{eqeqeq}{rgb}{0.65,0.65,0.65} 
\definecolor{uuuuuu}{rgb}{0,0,0} 
\definecolor{wwwwww}{rgb}{0,0,0} 
\definecolor{zzttqq}{rgb}{1,1,0} 
\begin{document}
\pagestyle{plain}

\mbox{}
\title{Set-valued Skyline Fillings}
\author{Cara Monical}
\address{Dept.~of Mathematics, U.~Illinois at
Urbana-Champaign, Urbana, IL 61801, USA}
\email{cmonica2@illinois.edu}

\date{\today}

\begin{abstract}Set-valued tableaux play an important role in combinatorial $K$-theory.
Separately, semistandard skyline fillings are a combinatorial model for Demazure atoms and key polynomials. 
We unify these two concepts by defining a set-valued extension of semistandard skyline fillings and then give analogues of results of J. Haglund, K. Luoto, S. Mason, and S. van Willigenburg. 
Additionally, we give a bijection between set-valued semistandard Young tableaux and C. Lenart's Schur expansion of the Grothendieck polynomial $G_\lambda$, using the uncrowding operator of V. Reiner, B. Tenner, and A. Yong.
\end{abstract}

\maketitle

\section{Introduction}
Textbook theory of the ring of symmetric functions concerns the Schur basis and its combinatorial model of semistandard Young tableaux.
In enumerative geometry, Schur functions $\{ s_\lambda \}$ are representatives for the Schubert classes in the cohomology ring of the Grassmannian.
The symmetric Grothendieck function $G_\lambda$ is an inhomogeneous deformation of $s_\lambda$ and plays the analogous role in the $K$-theory of the Grassmannian \cite{LasSch1982b}.
A. Buch introduced set-valued tableaux as a combinatorial model for $G_\lambda$, thus providing a $K$-analogue of semistandard Young tableaux \cite{Buch2002}.

In representation theory, Schur functions are the characters of irreducible representations of polynomial $GL_n$ representations.
Similarly, the key polynomials $\{ \kappa_{\lambda,w} \}$ \cite{LasSch1990, ReinerShimozono1995} are the characters of Demazure modules of type $A$ \cite{Demazure1974}. 
Moreover, for fixed $\lambda$, the key polynomials $\{ \kappa_{\lambda,w} \}_{w \in S_n}$ provide an interpolation between the single monomial $x^\lambda$ and the Schur function $s_\lambda$.
A. Lascoux and M.-P. Sch\"utzenberger  introduced Demazure atoms to decompose the key polynomials \cite{LasSch1990}, and thus Demazure atoms give a refinement of $s_\lambda$ into nonsymmetric pieces \cite{HLMvW2011,Mason2008}.
Combinatorially, S. Mason showed Demazure atoms are the generating function for semistandard skyline fillings \cite{Mason2009}.

The main goal of this paper is to unify these two extensions of Schur functions by defining semistandard set-valued skyline fillings.
We then give generalizations of results about ordinary skyline fillings to show how our definition provides a $K$-analogue to Demazure atoms.
This contributes to the study of  $K$-analogues in the realm of algebraic combinatorics, see \cite{LamPyl2007,PechYong16,ReinTenYong16,ThomasYong2009jdt} and the references therein.

\subsection{Background}
A \emph{weak composition} (resp. \emph{composition}) $\gamma$ with $k$ \emph{parts} is a sequence of $k$ nonnegative (resp. positive) integers $\gamma = (\gamma_1,\gamma_2,...,\gamma_k)$, and the \emph{size} of $\gamma$ is $|\gamma| = \sum_i \gamma_i$.  
The \emph{skyline diagram} for $\gamma$ with \emph{basement} $\mathbf{b}=(b_1,...,b_k)$ consists of $k$ left-justified rows with $\gamma_i$ boxes in row $i$, plus an additional column 0 containing the value $b_i$ in row $i$.
Furthermore, a \emph{filling} is an assigment of positive integers to the boxes of the skyline diagram.

Skyline diagrams and fillings were introduced by J. Haglund, M. Haiman, and N. Loehr \cite{HagHaiLoe2008} in their study of the nonsymmetric Macdonald polynomials.  	
Central to the concept of a skyline filling is \emph{triples} which consist of three boxes on two rows $i < j$.  
As pictured, there are two types of triples depending on the relative lengths of the rows.
\begin{center}
\begin{minipage}{.4\linewidth}
\begin{center}

\begin{ytableau}
c & a \\
\none & \none[\vdots] \\
\none & b
\end{ytableau} \vspace*{.1in}

Type A \\
$\gamma_i \geq \gamma_j$
\end{center}
\end{minipage}
\begin{minipage}{.4\linewidth}
\begin{center}

\begin{ytableau}
b  \\
\none[\vdots] \\
c & a
\end{ytableau} \vspace*{.1in}

Type B \\
$\gamma_i < \gamma_j$
\end{center}
\end{minipage}
\end{center}
When the rows are weakly decreasing, a triple is an \emph{inversion triple} if $b > c \geq a$ or $c \geq a > b$, and a \emph{coinversion triple} when $a \leq b \leq c$.
A filling is \emph{semistandard} if  
\begin{itemize}
\item [(M1)] entries do not repeat in a column,
\item [(M2)] rows are weakly decreasing (including the basement), and
\item [(M3)] every triple (including those with basement boxes) is an inversion triple.
\end{itemize}
The notion of a \emph{semistandard skyline filling} is due to S. Mason \cite{Mason2009} through her study of the Demazure atom.
Given a filling $F$, the \emph{content} of $F$ is the weak composition $\delta$ where $\delta_i$ is the number of $i$s in $F$, excluding any $i$s in the basement.
Then, the monomial $x^F$ is  $x^\delta = x_1^{\delta_1}x_2^{\delta_2}...x_k^{\delta_k}$, and the \emph{size} of $F$, denoted $|F|$, is $|\delta|$.   
Finally, the Demazure atom $A_\gamma$ is \[ A_\gamma = \sum_F x^F \] where the sum runs over all semistandard skyline fillings of $\gamma$ with basement $b_i  =i$ \cite{Mason2009}.

\subsection{Definition of Set-Valued Skyline Fillings}

We now extend the notion of semistandard fillings to set-valued fillings.
A \emph{set-valued filling} is an assignment of non-empty subsets of positive integers to the boxes of the skyline diagram.
The maximum entry in each box is the \emph{anchor entry} and all other entries are \emph{free entries}.
A set-valued filling is \emph{semistandard} if
\begin{itemize}
\item [(S1)]  entries do not repeat in a column,
\item[(S2)]  rows are weakly decreasing  where sets $A \geq B$ if $\min A \geq \max B$,
\item[(S3)]  every triple of anchor entries is an inversion triple, and
\item[(S4)]  free entries are in the highest (lowest numbered) row that does violate (S2).
\end{itemize}
Note that the concept of anchor entries is a key part of this definition, see Remark \ref{rmk:KeyPoint}.
Examples of semistandard set-valued skyline fillings, with their corresponding monomials, are given below where anchor entries are given in bold.

\begin{center}
\begin{minipage}{.4\linewidth}
\begin{center}
	\begin{ytableau}
	*(gray) 1 & \textbf{1}  \\ 
	*(gray) 2 \\
	*(gray) 3 & \textbf{3}2 & \textbf{2} & \textbf{2}1\\
	*(gray) 4 & \textbf{4} & \textbf{4}31
	\end{ytableau} \\\vspace*{.1in}
	$x_1^3x_2^3x_3^2x_4^2$
\end{center}
\end{minipage}
\begin{minipage}{.4\linewidth}
\begin{center}
	\begin{ytableau}
	*(gray) 5 & \textbf{4} & \textbf{3} \\ 
	*(gray) 4 \\
	*(gray) 3 & \textbf{3}2 & \textbf{2} & \textbf{2}1\\
	*(gray) 2 & \textbf{1}
	\end{ytableau} \\\vspace*{.1in}
$x_1^2x_2^3x_3^2x_4$
\end{center}
\end{minipage}
\end{center}
By analogy with the Demazure case, we define combinatorial Lascoux atoms as the generating function for semistandard set-valued skyline fillings.  

\begin{definition} For $\gamma$ let ${\sf SetSkyFill}(\gamma)$ be the set of semistandard set-valued skyline diagrams of shape $\gamma$ and basement $b_i = i$.  Then the combinatorial Lascoux atom $\mathcal{L}_\gamma$ is 
\[ \mathcal{L}_\gamma(x_1,...,x_{|\gamma|}; \beta) = \sum_{F \in {\sf SetSkyFill}(\gamma)} \beta^{|F|-|\gamma|}x^F. \] 
\label{defn:lasAtom}
\end{definition} 
Figure \ref{fig:skylineTable} gives $\mathcal{L}_\gamma$ and the corresponding fillings for weak compositions that are rearrangements of $(2,1,0)$.  
Clearly, setting $\beta = 0$ yields $A_\gamma$, and thus $\mathcal{L}_\gamma$ is a inhomogeneous deformation of $A_\gamma$.  
This mostly shows combinatorial Lascoux atoms form a new (finite) basis of ${\sf Pol} = \mathbb{Z}[x_1,x_2,...]$ -- this is Proposition \ref{prop:lasBasis}.   

Definition \ref{defn:lasAtom} is our $K$-analogue of the Demazure atom.  Conjecturally, it satisfies the natural recurrence for $K$-theoretic Demazure atoms (see Conjecture \ref{conj:LasConj}) and we will give generalizations of earlier results that hold for combinatorial Lascoux atoms to support this view.

\begin{figure}
\renewcommand{\arraystretch}{2.5}
\begin{tabular}{|c||p{6cm}|p{9cm}|}
\hline
$\mathcal{L}_{210}$ & $x_1^2x_2$ & \small\begin{ytableau} *(gray) 1 & 1 & 1 \\ *(gray) 2 & 2 \\ *(gray) 3 \end{ytableau} \\[.4in] \hline 
$\mathcal{L}_{201}$  & $x_1^2x_2$ & \small\begin{ytableau} *(gray) 1 & 1 & 1 \\ *(gray) 2  \\ *(gray) 3 & 3 \end{ytableau} \, \begin{ytableau} *(gray) 1 & 1 & 1 \\ *(gray) 2  \\ *(gray) 3 & 32 \end{ytableau} \\[.4in] \hline
$\mathcal{L}_{120}$  & $x_1x_2^2 + \beta x_1^2x_2^2$ & \small \begin{ytableau} *(gray) 1 & 1  \\ *(gray) 2 & 2 & 2 \\ *(gray) 3 \end{ytableau} \,  \begin{ytableau} *(gray) 1 & 1  \\ *(gray) 2 & 21 & 2 \\ *(gray) 3 \end{ytableau} \\[.4in] \hline
$\mathcal{L}_{021}$  & \pbox{6cm}{\vspace*{.35in}$(x_1x_2x_3 + x_2^2x_3) +$ \\ $\beta(2x_1x_2^2x_3 + x_1^2x_2x_3) +$ \\ $\beta^2x_1^2x_2^2x_3$}  & \small\begin{ytableau}
*(gray) 1 \\
*(gray) 2 & 2 & 1 \\
*(gray) 3 & 3 \\
\end{ytableau}
\begin{ytableau}
*(gray) 1 \\
*(gray) 2 & 2 & 2 \\
*(gray) 3 & 3 \\
\end{ytableau}
\begin{ytableau}
*(gray) 1 \\
*(gray) 2 & 2 & 2 \\
*(gray) 3 & 31 \\
\end{ytableau}
\begin{ytableau}
*(gray) 1 \\
*(gray) 2 & 2 & 21 \\
*(gray) 3 & 3 \\
\end{ytableau}
\begin{ytableau}
*(gray) 1 \\
*(gray) 2 & 2 & 21 \\
*(gray) 3 & 31 \\
\end{ytableau} \hspace*{.43cm}
\begin{ytableau}
*(gray) 1 \\
*(gray) 2 & 21 & 1 \\
*(gray) 3 & 3 \\
\end{ytableau} \\[.45in] \hline
$\mathcal{L}_{102}$  & \pbox{6cm}{\vspace*{.35in}$(x_1x_2x_3 + x_1x_3^2) + \beta(x_1^2x_2x_3 + $\\$x_1^2x_3^2 + x_1x_2x_3^2 + x_1x_2^2x_3) +$\\$ \beta^2(x_1^2x_2x_3^3 + x_1^2x_2^2x_3)$} &\small \begin{ytableau}
*(gray) 1 & 1 \\
*(gray) 2 \\
*(gray) 3 & 3 & 2 \\
\end{ytableau} \hspace*{.1in}
\begin{ytableau}
*(gray) 1 & 1 \\
*(gray) 2 \\
*(gray) 3 & 3 & 3 \\
\end{ytableau}\hspace*{.1in}
\begin{ytableau}
*(gray) 1 & 1 \\
*(gray) 2 \\
*(gray) 3 & 3 & 21 \\
\end{ytableau}\hspace*{.1in}
\begin{ytableau}
*(gray) 1 & 1 \\
*(gray) 2 \\
*(gray) 3 & 3 & 31 \\
\end{ytableau}
\begin{ytableau}
*(gray) 1 & 1 \\
*(gray) 2 \\
*(gray) 3 & 3 & 32 \\
\end{ytableau} \hspace*{.63cm}
\begin{ytableau}
*(gray) 1 & 1 \\
*(gray) 2 \\
*(gray) 3 & 3 & 321 \\
\end{ytableau}\hspace*{.45cm}
\begin{ytableau}
*(gray) 1 & 1 \\
*(gray) 2 \\
*(gray) 3 & 32 & 2 \\
\end{ytableau}\hspace*{.45cm}
\begin{ytableau}
*(gray) 1 & 1 \\
*(gray) 2 \\
*(gray) 3 & 32 & 21 \\
\end{ytableau}  \\[.58in] \hline
$\mathcal{L}_{012}$  & $x_2x_3^2 + \beta(2x_1x_2x_3^2 + x_2^2x_3^2) + \beta^2 (x_1x_2^2x_3^2 + x_1^2x_2x_3^2 + x_1x_2^2x_3^2) + \beta^3 x_1^2x_2^2x_3^2$ & \small \begin{ytableau}
*(gray) 1 \\
*(gray) 2 & 2 \\
*(gray) 3 & 3 & 3 \\
\end{ytableau}\hspace*{.1cm}
\begin{ytableau}
*(gray) 1 \\
*(gray) 2 & 2 \\
*(gray) 3 & 3 & 31 \\
\end{ytableau}\hspace*{.1cm}
\begin{ytableau}
*(gray) 1 \\
*(gray) 2 & 2 \\
*(gray) 3 & 3 & 32 \\
\end{ytableau}\hspace*{.1cm}
\begin{ytableau}
*(gray) 1 \\
*(gray) 2 & 2 \\
*(gray) 3 & 3 & 321 \\
\end{ytableau}
\begin{ytableau}
*(gray) 1 \\
*(gray) 2 & 21 \\
*(gray) 3 & 3 & 3 \\
\end{ytableau}\hspace*{.5cm}
\begin{ytableau}
*(gray) 1 \\
*(gray) 2 & 21 \\
*(gray) 3 & 3 & 31 \\
\end{ytableau}\hspace*{.53cm}
\begin{ytableau}
*(gray) 1 \\
*(gray) 2 & 21 \\
*(gray) 3 & 3 & 32 \\
\end{ytableau} \hspace*{.45cm}
\begin{ytableau}
*(gray) 1 \\
*(gray) 2 & 21 \\
*(gray) 3 & 3 & 321 \\
\end{ytableau}\vspace*{.1in}\\
\hline
\end{tabular}
\label{fig:skylineTable}
\caption{This example gives $\mathcal{L}_\gamma$ and the corresponding semistandard set-valued skyline fillings for weak compositions that are rearrangements of $(2,1,0)$.}
\end{figure} 

\subsection{Main Results}
A \emph{partition} is a weak composition such that the parts are weakly decreasing, and for $\gamma$, define $\lambda(\gamma)$ as the unique partition with the same parts as $\gamma$.
As $s_\lambda = \sum_{\lambda(\gamma) = \lambda} A_\gamma$, the Demazure atoms are a nonsymmetric refinement of the Schur functions \cite{Mason2009}.
We generalize this to $G_\lambda$ and $\mathcal{L}_\gamma$, the $K$-analogues of $s_\lambda$ and $A_\gamma$, respectively. 
\begin{restatable}{theorem}{GLasDecomp}
\[ G_\lambda = \sum_{\lambda(\gamma) = \lambda} \mathcal{L}_\gamma .\] 
\label{thm:GLasDecomp}
\end{restatable}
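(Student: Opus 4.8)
The plan is to prove the identity bijectively, via a weight-preserving map that specializes, at $\beta = 0$, to S.~Mason's decomposition of Schur functions into Demazure atoms \cite{Mason2009}. First I would unpack both sides as generating functions over the same $(x,\beta)$-weight. By A.~Buch's definition \cite{Buch2002}, $G_\lambda = \sum_T \beta^{|T|-|\lambda|} x^T$, where the sum runs over all set-valued semistandard Young tableaux $T$ of shape $\lambda$. On the other side, since every $\gamma$ in the sum satisfies $|\gamma| = |\lambda|$, Definition~\ref{defn:lasAtom} gives
\[ \sum_{\lambda(\gamma)=\lambda} \mathcal{L}_\gamma = \sum_{\lambda(\gamma)=\lambda}\; \sum_{F \in {\sf SetSkyFill}(\gamma)} \beta^{|F|-|\lambda|} x^F , \]
so the right-hand side is the generating function, by the identical weight, over the disjoint union $\bigsqcup_{\lambda(\gamma)=\lambda} {\sf SetSkyFill}(\gamma)$ of all set-valued semistandard skyline fillings whose shape rearranges to $\lambda$. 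It therefore suffices to construct a bijection $\Phi$ from set-valued semistandard Young tableaux of shape $\lambda$ to this disjoint union that preserves both the content and the size. I note that I cannot shortcut the argument by matching a Demazure-type recurrence, since for $\mathcal{L}_\gamma$ such a recurrence is only conjectural (Conjecture~\ref{conj:LasConj}); a genuine bijection is needed.

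To build $\Phi$ I would first isolate the anchor entries. In a set-valued semistandard Young tableau the anchor (maximum) entries weakly increase along rows and strictly increase down columns, so they form an ordinary semistandard Young tableau; symmetrically, (S1)--(S3) force the anchor entries of a set-valued semistandard skyline filling to satisfy (M1)--(M3), hence to form an ordinary semistandard skyline filling. Applying Mason's bijection, which I denote $\rho$ \cite{Mason2009}, to the anchor tableau produces an anchor skyline filling of some shape $\gamma$ with $\lambda(\gamma) = \lambda$, and this already establishes the $\beta = 0$ case. It then remains to transport the free entries: a free entry $f$ in a box of $T$ with anchor $a$ is attached to the box into which $\rho$ sends $a$ and, if necessary, slid to the unique row dictated by (S4). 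The inverse $\Phi^{-1}$ runs $\rho^{-1}$ on anchors and reverses this free-entry transport.

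With $\Phi$ in hand I would verify three points in turn. Well-definedness: the image satisfies (S1)--(S4), which reduces to the classical check for anchors through $\rho$ together with a verification that the transported free entries occupy admissible boxes. Bijectivity: the explicit inverse above composes with $\Phi$ to the identity on both sides. Weight preservation: each free entry contributes one factor of $\beta$ and one variable on both sides, and anchors are matched by the content-preserving $\rho$, so the full $(x,\beta)$-weight is preserved; this last point is routine once the map is set up.

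The main obstacle is the free-entry transport, specifically reconciling condition (S4) with $\rho$. The difficulty is that (S4) pins each free entry to a prescribed row of the skyline filling, whereas $\rho$ permutes the rows of the underlying tableau. One must show that a free entry $f < a$ of $T$, after attachment to $\rho(a)$ and the (S4)-slide, lands in exactly one admissible box, and conversely that every free entry of a semistandard set-valued skyline filling arises in this way from a unique free entry of $T$. Proving that the rows forced by (S4) on the skyline side correspond precisely to the free-entry multiplicities recorded in the boxes of $T$ is where the real work lies; once this correspondence is established, content- and size-preservation follow by bookkeeping.
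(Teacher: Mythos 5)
Your proposal is essentially the paper's own proof: the paper constructs exactly this bijection $\hat{\rho}$ (Theorem \ref{thm:rhoBijection}), acting on anchor entries by Mason's map $\rho$ and redistributing the free entries within their columns as (S4)/column-strictness dictates, and its proof carries out precisely the free-entry well-definedness check you flag as the main obstacle. The only cosmetic difference is that the paper adopts the set-valued \emph{reverse} tableaux convention for $G_\lambda$, which avoids the orientation mismatch between Buch's increasing tableaux and Mason's $\rho$ that your write-up would otherwise have to patch.
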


In between the ring of formal power series and the ring of symmetric functions, is $QSym$, the ring of quasisymmetric functions.  
A function $f$ is \emph{quasisymmetric} if for any positive integers $\alpha_1, ..., \alpha_k$ and strictly increasing sequence of positive integers $i_1 < ... < i_k$,
\[ [x_{i_1}^{\alpha_1}... x_{i_k}^{\alpha_k}] f = [x_1^{\alpha_1}...x_k^{\alpha_k}] f. \]

J.~Haglund, K. Luoto, S. Mason, and S. van Willigenburg also use Demazure atoms to define the \emph{quasisymmetric Schur functions} $\{ \mathcal{S}_\alpha\}$, which provide a quasisymmetric refinement of the Schur functions \cite{HLMvW2011QS}.
We generalize \cite[Definition 5.1]{HLMvW2011QS} to define the quasisymmetric Grothendieck functions.
\begin{definition}
For a composition $\alpha$, the quasisymmetric Grothendieck function $\mathcal{G}_\alpha$ is  
\[ \mathcal{G}_\alpha = \sum_{\gamma^+ = \alpha} \mathcal{L}_\gamma \] where $\gamma^+$ is the composition formed by dropping zero parts from $\alpha$.
\label{defn:QuasiGroth}
\end{definition}
By combining Theorem \ref{thm:GLasDecomp} and Definition \ref{defn:QuasiGroth}, we decompose $G_\lambda$ into quasisymmetric Grothendieck functions which generalizes the decomposition in \cite[pg. 13]{HLMvW2011QS}.
\begin{corollary}
\[ G_\lambda = \sum_{\lambda(\alpha) = \lambda} \mathcal{G}_\alpha. \]
\end{corollary}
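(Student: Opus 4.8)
The plan is to deduce the corollary purely by reindexing, combining Theorem \ref{thm:GLasDecomp} with Definition \ref{defn:QuasiGroth}; no new combinatorics about the fillings themselves is needed. The engine of the argument is a single elementary identity relating the two notions of associated partition.

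First I would record the identity $\lambda(\gamma) = \lambda(\gamma^+)$ for every weak composition $\gamma$. Passing from $\gamma$ to its positive part $\gamma^+$ only deletes the zero entries, so the multiset of \emph{nonzero} parts is unchanged; since $\lambda(\cdot)$ is by definition the weakly decreasing rearrangement of the parts (and trailing zeros are irrelevant to a partition), sorting before or after deleting zeros yields the same partition. Thus the partition attached to a weak composition depends only on its positive part.

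Next I would use this to match the index sets of the two formulas. Consider the map $\gamma \mapsto \gamma^+$ restricted to the index set $\{\gamma : \lambda(\gamma) = \lambda\}$ of Theorem \ref{thm:GLasDecomp}. If $\lambda(\gamma) = \lambda$, then setting $\alpha = \gamma^+$ gives $\lambda(\alpha) = \lambda(\gamma^+) = \lambda(\gamma) = \lambda$ by the identity above, so $\alpha$ is a composition summed over in the corollary; conversely, any $\gamma$ with $\gamma^+ = \alpha$ for such an $\alpha$ satisfies $\lambda(\gamma) = \lambda(\alpha) = \lambda$. Since each $\gamma$ has a unique positive part, the fibers are disjoint, giving the partition
\[ \{\gamma : \lambda(\gamma) = \lambda\} \;=\; \bigsqcup_{\lambda(\alpha) = \lambda} \{\gamma : \gamma^+ = \alpha\}, \]
where the inner sets are exactly those appearing in Definition \ref{defn:QuasiGroth}.

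Finally I would substitute $\mathcal{G}_\alpha = \sum_{\gamma^+ = \alpha} \mathcal{L}_\gamma$ and regroup:
\[ \sum_{\lambda(\alpha) = \lambda} \mathcal{G}_\alpha \;=\; \sum_{\lambda(\alpha) = \lambda}\ \sum_{\gamma^+ = \alpha} \mathcal{L}_\gamma \;=\; \sum_{\lambda(\gamma) = \lambda} \mathcal{L}_\gamma \;=\; G_\lambda, \]
the last equality being Theorem \ref{thm:GLasDecomp}. This is essentially bookkeeping, so I do not anticipate a genuine obstacle; the only point demanding care is the identity $\lambda(\gamma) = \lambda(\gamma^+)$ together with the verification that the fibers of $\gamma \mapsto \gamma^+$ partition the index set of Theorem \ref{thm:GLasDecomp} exactly, with no $\gamma$ omitted and none double-counted.
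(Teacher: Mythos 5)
Your proposal is correct and matches the paper's own (implicit) argument: the corollary is obtained exactly by substituting Definition \ref{defn:QuasiGroth} into Theorem \ref{thm:GLasDecomp} and regrouping the sum over $\{\gamma : \lambda(\gamma)=\lambda\}$ according to the fibers of $\gamma \mapsto \gamma^+$. Your care in spelling out the identity $\lambda(\gamma)=\lambda(\gamma^+)$ and the disjointness of the fibers is exactly the bookkeeping the paper leaves unstated.
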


Furthermore, $\{ \mathcal{G}_\alpha \}$ does in fact form a basis of $QSym$, generalizing \cite[Proposition 5.5]{HLMvW2011QS} and thus using our $K$-analogue of $\mathcal{A}_\gamma$, we are able to provide a $K$-analogue of $\mathcal{S}_\alpha$.

\begin{restatable}{theorem}{QuasiGroth}
As $\alpha$ runs over all compositions, the functions $\{\mathcal{G}_\alpha\}$ form a basis for $QSym$.
\label{thm:QuasiGrothBasis}
\end{restatable}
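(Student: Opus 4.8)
The plan is to prove the statement by showing that the transition matrix expressing each $\mathcal{G}_\alpha$ in the monomial quasisymmetric basis $\{M_\mu\}$ is unitriangular with respect to a suitable total order on compositions; this immediately yields that $\{\mathcal{G}_\alpha\}$ is a change of basis away from $\{M_\mu\}$. As a preliminary step I would first confirm that each $\mathcal{G}_\alpha$ is genuinely quasisymmetric. Following the template of \cite{HLMvW2011QS}, this can be done by exhibiting, for each pair of adjacent variables, a content-adjusting involution on $\bigcup_{\gamma^+ = \alpha} {\sf SetSkyFill}(\gamma)$ that interchanges the multiplicities of those two variables, so that the defining sum is invariant under the required monomial symmetries.

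The key observation is that setting $\beta = 0$ recovers the classical picture. Since $\mathcal{L}_\gamma|_{\beta = 0} = A_\gamma$, we obtain $\mathcal{G}_\alpha|_{\beta=0} = \sum_{\gamma^+ = \alpha} A_\gamma = \mathcal{S}_\alpha$, the quasisymmetric Schur function of \cite[Definition 5.1]{HLMvW2011QS}. By \cite[Proposition 5.5]{HLMvW2011QS} there is a partial order $\lhd$ on the compositions of each fixed size $n$ for which $\mathcal{S}_\alpha = M_\alpha + \sum_{\mu \rhd \alpha} c_{\alpha\mu} M_\mu$; that is, the degree-preserving part of $\mathcal{G}_\alpha$ is already unitriangular in $\{M_\mu\}$.

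Next I would control the inhomogeneous part. Because each filling $F$ contributes $\beta^{|F|-|\gamma|} x^F$, every monomial occurring in $\mathcal{G}_\alpha$ has degree at least $|\alpha|$, with a degree-$d$ monomial weighted by exactly $\beta^{d - |\alpha|}$; in particular no $M_\mu$ with $|\mu| < |\alpha|$ appears. I then define a total order on all compositions by declaring $\mu \prec \nu$ when $|\mu| < |\nu|$, refining within each fixed size by the order $\lhd$ above. With respect to this order the combined expansion reads $\mathcal{G}_\alpha = M_\alpha + \sum_{\mu \succ \alpha} c_{\alpha\mu}\, M_\mu$, i.e., it is upper unitriangular. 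Such a matrix is invertible in the degree-completed sense appropriate to inhomogeneous families, exactly as for the symmetric Grothendieck functions $\{G_\lambda\}$, so $\{\mathcal{G}_\alpha\}$ and $\{M_\mu\}$ are related by an invertible change of basis, and hence $\{\mathcal{G}_\alpha\}$ is a basis.

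The main obstacle is the quasisymmetry of $\mathcal{G}_\alpha$: unlike the unitriangularity, which is essentially formal once the $\beta = 0$ specialization is identified, the involution proving quasisymmetry must be constructed so as to respect conditions (S1)--(S4) and the anchor/free distinction while still toggling the two chosen variable multiplicities. A secondary point requiring care is to make the word ``basis'' precise: since each $\mathcal{G}_\alpha$ is inhomogeneous of unbounded degree, inverting the transition matrix takes place in the degree-completion of $QSym$ over $\mathbb{Z}[\beta]$, and I would state the conclusion in that topological sense.
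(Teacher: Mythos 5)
Your argument is correct in outline, but it reaches the theorem by a genuinely different route from the paper's. The paper first proves quasisymmetry of $\mathcal{G}_\alpha$ by exactly the kind of content-swapping bijection you sketch (replace every $i$ by $i+1$ in a tableau containing no $(i+1)$'s, and check (Q1)--(Q3) survive), and then, rather than invoking any known property of the quasisymmetric Schur functions, it works inside $QSym_n$: using Lemma \ref{lem:atomLexLargest} (each $\mathcal{L}_\gamma$ is $x^\gamma$ plus lexicographically smaller monomials with bounded parts), it runs a subtract-the-leading-term algorithm to show that $\{\mathcal{G}_\alpha(x_1,\dots,x_n,0,\dots)\}$ spans $QSym_n$, concludes it is a basis of $QSym_n$ by a cardinality count against the monomial quasisymmetric basis (Proposition \ref{prop:quasiBasisN}), and finally lets $n\to\infty$, arguing that the expansions stabilize. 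You instead stay in infinitely many variables: you identify the $\beta=0$ (equivalently, lowest-degree) part of $\mathcal{G}_\alpha$ with $\mathcal{S}_\alpha$, import the unitriangularity of $\{\mathcal{S}_\alpha\}$ over $\{M_\mu\}$ from \cite{HLMvW2011QS}, and observe that all remaining terms have strictly higher degree, so the full transition matrix is unitriangular for the order ``degree first, then $\lhd$'' and can be inverted in the degree completion. Your route is shorter, reuses the classical theorem instead of reproving triangularity (Lemma \ref{lem:atomLexLargest} plays that role in the paper), and is more candid about the fact that expanding a finite-degree quasisymmetric function in the inhomogeneous $\mathcal{G}_\alpha$'s requires infinite sums, so ``basis'' must be read topologically --- a point the paper's $n\to\infty$ stabilization step leaves implicit; the paper's route, in exchange, is self-contained and works with honest polynomials throughout. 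Two cautions on your write-up: the quasisymmetry of $\mathcal{G}_\alpha$, which you defer, is where the substantive combinatorial work lies (it occupies a full proposition in the paper), so your proof is incomplete without it; and since the cited unitriangularity concerns the homogeneous $\mathcal{S}_\alpha$ of one fixed degree, you should state explicitly that each graded piece of $\mathcal{G}_\alpha$ is a finite $\mathbb{Z}$-combination of $M_\mu$'s carrying the single scalar $\beta^{d-|\alpha|}$, which is what makes your combined transition matrix well defined row by row.
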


As seen below, the expansion of a power series $f$ into Lascoux atoms allows us to determine if $f$ is quasisymmetric or symmetric.
If it is, the expansion allows us to determine if $f$ is $\mathcal{G}_\alpha$- or $G_\lambda$-positive, which is often of interest, cf. \cite[Section 1.1]{LMvW2013}.
\begin{restatable}{proposition}{Refinement} 
 Suppose $f = \sum_\gamma c_\gamma \mathcal{L}_\gamma$.  Then 
\begin{enumerate}
\item $f$ is quasisymmetric if and only if $c_\gamma = c_\delta$ for all $\gamma^+ = \delta^+$, and
\item $f$ is symmetric if and only if $c_\gamma = c_\delta$ for all $\lambda(\gamma) = \lambda(\delta)$.
\end{enumerate}
Furthermore, if $f$ is quasisymmetric (resp. symmetric), $f$ is $\mathcal{G}_\alpha$-positive (resp. $G_\lambda$-positive) if and only if $f$ is $\mathcal{L}_\gamma$-positive.
\label{prop:Refinement}
\end{restatable}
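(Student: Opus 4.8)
The plan is to deduce every assertion from one observation: once $f=\sum_\gamma c_\gamma\mathcal{L}_\gamma$ is known to be quasisymmetric (resp. symmetric), its $\mathcal{L}_\gamma$-coefficients are forced to be constant on the fibers of $\gamma\mapsto\gamma^+$ (resp. $\gamma\mapsto\lambda(\gamma)$), and on those fibers they coincide with the coefficients of the $\mathcal{G}_\alpha$- (resp. $G_\lambda$-) expansion. The two ingredients are the basis statements for $\{\mathcal{L}_\gamma\}$ (Proposition \ref{prop:lasBasis}), $\{\mathcal{G}_\alpha\}$ (Theorem \ref{thm:QuasiGrothBasis}), and $\{G_\lambda\}$, together with the defining expansions $\mathcal{G}_\alpha=\sum_{\gamma^+=\alpha}\mathcal{L}_\gamma$ (Definition \ref{defn:QuasiGroth}) and $G_\lambda=\sum_{\lambda(\gamma)=\lambda}\mathcal{L}_\gamma$ (Theorem \ref{thm:GLasDecomp}).

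The ``if'' directions are the easy half. For (1), if $c_\gamma=c_\delta$ whenever $\gamma^+=\delta^+$, let $d_\alpha$ denote the common value of $c_\gamma$ on the fiber $\{\gamma:\gamma^+=\alpha\}$ and regroup: $f=\sum_\alpha d_\alpha\sum_{\gamma^+=\alpha}\mathcal{L}_\gamma=\sum_\alpha d_\alpha\mathcal{G}_\alpha$, which is quasisymmetric since each $\mathcal{G}_\alpha$ is. For (2), the identical regrouping by $\lambda(\gamma)$, this time using Theorem \ref{thm:GLasDecomp}, gives $f=\sum_\lambda e_\lambda G_\lambda$, which is symmetric since each $G_\lambda$ is.

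For the converses the key tool is uniqueness of expansion. Assume $f$ is quasisymmetric. Since $\{\mathcal{G}_\alpha\}$ is a basis of $QSym$, write $f=\sum_\alpha d_\alpha\mathcal{G}_\alpha$ uniquely; expanding each $\mathcal{G}_\alpha$ by Definition \ref{defn:QuasiGroth} yields $f=\sum_\gamma d_{\gamma^+}\mathcal{L}_\gamma$. Because $\{\mathcal{L}_\gamma\}$ is a basis of ${\sf Pol}$, comparing with $f=\sum_\gamma c_\gamma\mathcal{L}_\gamma$ forces $c_\gamma=d_{\gamma^+}$, so $c_\gamma$ depends only on $\gamma^+$; this is the ``only if'' of (1). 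If $f$ is symmetric then it is in particular quasisymmetric, so $c_\gamma=d_{\gamma^+}$ by (1); invoking that $\{G_\lambda\}$ is a basis of the (completed) ring of symmetric functions, write $f=\sum_\lambda e_\lambda G_\lambda$, expand via Theorem \ref{thm:GLasDecomp} to get $f=\sum_\gamma e_{\lambda(\gamma)}\mathcal{L}_\gamma$, and compare coefficients to conclude $c_\gamma=e_{\lambda(\gamma)}$, which depends only on $\lambda(\gamma)$. The positivity clause is then immediate: one direction is free, since $\mathcal{G}_\alpha$ and $G_\lambda$ are visibly $\mathcal{L}_\gamma$-positive; conversely, if $f$ is quasisymmetric and $\mathcal{L}_\gamma$-positive then $d_\alpha=c_\gamma\ge 0$ for any $\gamma$ with $\gamma^+=\alpha$, so $f$ is $\mathcal{G}_\alpha$-positive, and the symmetric case is identical with $e_\lambda=c_\gamma$.

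The conceptual content of the argument is light, but I expect the real care to lie in the bookkeeping hidden in ``uniqueness of expansion.'' Because $\mathcal{L}_\gamma$ is inhomogeneous in the $x$-variables, one must first fix a grading (for instance $\deg\beta=-1$ and $\deg x_i=1$, so that $\mathcal{L}_\gamma$ is homogeneous of degree $|\gamma|$) and work in the correspondingly degree-completed ring, so that all three families are genuine bases in compatible completions and the regroupings of possibly infinite sums are legitimate. One must also explicitly invoke the classical fact (due to Buch) that $\{G_\lambda\}$ is a basis of the completed ring of symmetric functions, as this is used in the converse of (2) but is not established in the preceding excerpt.
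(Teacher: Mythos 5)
Your proposal is correct and takes essentially the same route as the paper: both prove the equivalences by comparing the given $\mathcal{L}_\gamma$-expansion against $\mathcal{G}_\alpha = \sum_{\gamma^+=\alpha}\mathcal{L}_\gamma$ and $G_\lambda = \sum_{\lambda(\gamma)=\lambda}\mathcal{L}_\gamma$, invoking the basis property of $\{\mathcal{G}_\alpha\}$ for $QSym$ (resp.\ the $G_\lambda$-expansion of symmetric functions) together with linear independence of $\{\mathcal{L}_\gamma\}$, and read off positivity from the equality of coefficients on fibers. The paper's proof is terser and silently assumes the points you flag (convergence/completions and the fact that every symmetric function expands in $\{G_\lambda\}$), so your added bookkeeping is a refinement of, not a departure from, its argument.
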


The next two sections further investigate Lascoux atoms and quasisymmetric Grothen-dieck functions.  
Then in section 4, we provide a link between ordinary and set-valued tableaux through a bijection between Lenart's Schur expansion of $G_\lambda$ and set-valued tableaux that produces a pair of tableaux from a set-valued tableaux using the uncrowding operation of V. Reiner, B. Tenner, and A. Yong \cite{ReinTenYong16}.
Finally, in section 5, we state further conjectures about Lascoux atoms that continue the analogy with Demazure atoms.
\section{Combinatorial Lascoux Atoms}
We first show that combinatorial Lascoux atoms form a finite basis of ${\sf Pol} = \mathbb{Z}[x_1,.x_2,...]$.
Let $\prec$ be the lexicographic order on monomials.

\begin{lemma} For $k = \max \gamma$,
\[ \mathcal{L}_\gamma = x^\gamma + \sum_{\substack{\delta \prec \gamma \\ \max \delta \leq k}} c_{\gamma,\delta} \beta^{|\delta|-|\gamma|}x^\delta. \]
\label{lem:atomLexLargest}
\end{lemma}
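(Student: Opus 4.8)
The plan is to identify $x^\gamma$ as the contribution of a single distinguished filling and then to bound the content of every other filling. First I would record the observation that makes the $\beta$-bookkeeping automatic: if $F\in{\sf SetSkyFill}(\gamma)$ has content $\delta$, then $|F|=|\delta|$, since the content counts every non-basement entry and each entry lies in exactly one box. Hence $F$ contributes precisely $\beta^{|\delta|-|\gamma|}x^\delta$, every occurring monomial $x^\delta$ carries the advertised power, and the coefficient of a given $x^\delta$ simply counts fillings of content $\delta$. It therefore suffices to prove three things: (i) there is a unique $F$ of content $\gamma$; (ii) every other occurring content $\delta$ satisfies $x^\delta\prec x^\gamma$; and (iii) $\max\delta\le k$ for every occurring $\delta$.

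For (i) I would exhibit $F_0$, the filling whose row $i$ consists of $\gamma_i$ singleton boxes each equal to $i$. Its content is $\gamma$, and I would verify (S1)--(S4) directly: columns are distinct since row $i$ contributes only the value $i$; rows are constant, hence weakly decreasing; there are no free entries, so (S4) is vacuous; and every triple is an inversion triple because the two equal entries sit in the earlier (shorter) row while the strictly larger row-index value sits in the later (longer) row. The structural input underlying everything else is that, since the basement of row $i$ equals $i$ and rows weakly decrease under $A\ge B\iff\min A\ge\max B$, every entry of row $i$ is $\le i$.

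The heart of the argument is the prefix count $N_{\le v}(F):=\#\{\text{non-basement entries of }F\text{ that are}\le v\}$. Writing $n$ for the number of rows, all entries of rows $1,\dots,v$ are $\le v$ and each such box holds at least its anchor, so $N_{\le v}(F)\ge\#\{\text{boxes in rows }1,\dots,v\}=\sum_{i\le v}\gamma_i=N_{\le v}(F_0)$ for every $v$. If equality holds for all $v$, then taking $v=n$ forces $|F|=|\gamma|$, so $F$ has no free entries and has content $\gamma$; reading values from the bottom row up --- value $n$ can occur only in row $n$, which has exactly $\gamma_n$ boxes, forcing that row to contain only $n$, then row $n-1$ to contain only $n-1$, and so on --- shows $F=F_0$, giving (i). If instead some inequality is strict, let $v_0$ be least with $N_{\le v_0}(F)>N_{\le v_0}(\gamma)$; representing each monomial by the weakly increasing word having $\delta_i$ copies of $i$, strictly more entries $\le v_0$ yields the lexicographically smaller word, which is exactly the meaning of $x^\delta\prec x^\gamma$ here, giving (ii).

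Finally (iii) is immediate: the skyline diagram of $\gamma$ has exactly $k=\max\gamma$ non-basement columns, and (S1) forbids repetition of a value within a column, so each value occurs at most $k$ times; thus $\delta_v\le k$ for all $v$ and $\max\delta\le k$. The step I expect to demand the most care is matching the prefix-count dichotomy to the precise reading of the order $\prec$ used here (equivalently, confirming that $F_0$ is $\prec$-maximal rather than $\prec$-minimal), together with a clean verification that $F_0$ meets the triple condition (S3); the single inequality $N_{\le v}(F)\ge\sum_{i\le v}\gamma_i$ is the idea that drives both the uniqueness in (i) and the maximality in (ii).
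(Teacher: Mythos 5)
Your proposal is correct and takes essentially the same route as the paper: both rest on the observation that row $i$ can contain only values $\le i$ (basement plus (S2)), use it to show the content-$\gamma$ filling is unique and that every other occurring content is lexicographically smaller via a prefix/pigeonhole count, and bound $\max\delta\le k$ by column-distinctness (S1); your explicit weakly-increasing-word reading of $\prec$ is exactly the convention the paper uses implicitly. One small slip worth noting: in your check of (S3) for $F_0$ the parenthetical shorter/longer labels are backwards in each triple type, but the conclusion is immediate anyway (every triple of $F_0$ consists of two entries equal to $i$ and one equal to $j>i$, so it satisfies $b>c\ge a$ or $c\ge a>b$), and the paper omits this verification entirely.
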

\begin{proof}
Since we are considering skyline fillings with basement $b_i = i$ and rows are weakly decreasing (S2), for any $i_0$, the boxes in the first $i_0$ rows can only have the values $1,...,i_0$.

Then, we first consider skyline fillings of shape $\gamma$ and content $\gamma$.  
There can be no free entries, as we have exactly as many entries as we have boxes.
Since the first row can only contain 1s and we have $\gamma_1$ boxes in the first row and $\gamma_1$ entries with value 1, all 1s must be placed in the first row.
Likewise, the second row can only contain 1s and 2s.  
However all 1s were placed in row 1, and so we have $\gamma_2$ boxes in row 2 and exactly $\gamma_2$ 2s that can be placed in the second row.  
Thus all the 2s must be placed in the second row.
Proceeding in this manner, we see row $i$ must contain all $i$s. 
Thus, $x^\gamma$ appears in $\mathcal{L}_\gamma$ with coefficient 1 because the filling formed by filling row $i$ with all $i$s for anchor entries and no free entries is the unique element of ${\sf SetSkyFill}(\gamma)$ with content $\gamma$.

Now suppose $\delta \succ \gamma$ and we will show there is no element of ${\sf SetSkyFill}(\gamma)$ with content $\delta$.
Since $\delta \succ \gamma$, there exists $i_0$ such that \[ \sum_{i=1}^{i_0} \gamma_i > \sum_{i=1}^{i_0} \delta_i. \]
However, $\displaystyle \sum_{i=1}^{i_0} \gamma_i$ is the number of boxes in the first $i_0$ rows and $\displaystyle \sum_{i=1}^{i_0} \delta_i$ is the number of instances of the numbers $1, ..., i_0$.
Thus there are more boxes in rows $1,...,i_0$ than instances of the numbers $1,...,i_0$, and so at least one box in rows $1,...,i_0$ must be empty.  
Then no element of ${\sf SetSkyFill}(\gamma)$ has content $\delta$. 

Finally consider $F \in {\sf SetSkyFill}(\gamma)$ of content $\delta$.
Since $F$ has $k$ columns, excluding the basement, and numbers cannot repeat in a column (S1), each $i$ can appear at most $k$ times in $F$. 
Thus $\max{\delta} \leq k$.
\end{proof}

\begin{proposition}
As $\gamma$ ranges over all weak compositions, $\{ \mathcal{L}_\gamma \}$ forms a finite basis for ${\sf Pol}$.
\label{prop:lasBasis}
\end{proposition}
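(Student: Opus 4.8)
The plan is to read off everything from the unitriangularity furnished by Lemma~\ref{lem:atomLexLargest}. Form the transition ``matrix'' $M$ with rows and columns indexed by weak compositions, where $M_{\gamma\delta}$ is the coefficient of $x^\delta$ in $\mathcal{L}_\gamma$; the lemma says $M_{\gamma\gamma}=1$ and $M_{\gamma\delta}=0$ unless $\delta\preceq\gamma$ in lex order. Since $\{x^\gamma\}$ is the standard basis of ${\sf Pol}$, it suffices to show $M$ is invertible (whether one treats $\beta$ as a formal parameter and works over $\mathbb{Z}[\beta]$, or specializes $\beta$ to a scalar, the argument uses only that the leading coefficient is a unit). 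Linear independence is the easy half: in any finite relation $\sum_\gamma a_\gamma\mathcal{L}_\gamma=0$, take the lex-largest $\gamma$ with $a_\gamma\neq 0$; every other atom $\mathcal{L}_\delta$ occurring has all of its monomials $\preceq\delta\prec\gamma$, so the coefficient of $x^\gamma$ on the left is exactly $a_\gamma$, forcing $a_\gamma=0$, a contradiction.

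The main obstacle is spanning, and the subtlety is that lex order on weak compositions is \emph{not} a well-order: it admits infinite strictly decreasing chains such as $x_1\succ x_2\succ x_3\succ\cdots$, so one cannot simply invert an infinite lower-unitriangular matrix by descending induction. I would resolve this by confining the support of each atom to a fixed finite box. From the proof of Lemma~\ref{lem:atomLexLargest}, every monomial $x^\delta$ appearing in $\mathcal{L}_\gamma$ has largest part $\max\delta\le\max\gamma=:k$ and, since row $i$ of any filling uses only the values $1,\dots,i$, is supported on the first $\ell$ variables, where $\ell$ is the number of parts (rows) of $\gamma$. Hence all such $\delta$ lie in the finite set $B=\{0,1,\dots,k\}^{\ell}$.

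The key point is that $B$ is closed under the expansion: if $\nu\in B$ then $\max\nu\le k$ and $\nu$ has at most $\ell$ parts, so by the same bounds every monomial of $\mathcal{L}_\nu$ again lies in $B$. Thus the finite submatrix $M|_{B}$, with rows and columns indexed by $B$, records all the relevant data, and by Lemma~\ref{lem:atomLexLargest} it is triangular with unit diagonal relative to the (now genuinely well-defined, because finite) lex order on $B$. A finite unitriangular matrix has determinant $1$ and is invertible over the coefficient ring, so $\{\mathcal{L}_\nu:\nu\in B\}$ and $\{x^\delta:\delta\in B\}$ span the same module; in particular the given monomial $x^\gamma$ is a finite combination of Lascoux atoms. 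Together with linear independence this proves $\{\mathcal{L}_\gamma\}$ is a basis, and the confinement to the box $B$ is precisely what guarantees that each polynomial is expressible as a \emph{finite} combination.
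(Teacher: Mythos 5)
Your proof is correct, and although it runs on the same engine as the paper's --- Lemma \ref{lem:atomLexLargest} and lex-unitriangularity --- it takes a noticeably different route at both steps, and in one place is more careful than the paper itself. For spanning, the paper iteratively subtracts the atom attached to the lex-largest surviving monomial and justifies termination by asserting that there are only finitely many weak compositions $\delta \prec \gamma$ with $\max\delta \leq k$; taken literally this is false (for every $j$, the monomial $x_1x_j$ is lex-smaller than $x_1^2$ and has maximum part at most $2$), and the assertion only becomes true after adding the bound you make explicit: every entry of a filling of shape $\delta$ is at most its row index, so all monomials of $\mathcal{L}_\delta$ involve only $x_1,\dots,x_\ell$. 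Your finite box $B=\{0,1,\dots,k\}^{\ell}$, together with the check that $B$ is closed under expansion, supplies exactly this missing ingredient, and recasting the elimination as inversion of a finite unitriangular matrix makes termination automatic rather than something to be argued. For linear independence your argument is genuinely different: the paper specializes $\beta=0$ and invokes Mason's theorem that the Demazure atoms $A_\gamma$ form a basis of ${\sf Pol}$, whereas you read off the lex-leading coefficient directly from triangularity. Your version is self-contained (it never needs the Demazure case) and, as you note, indifferent to whether $\beta$ is treated as a formal parameter over $\mathbb{Z}[\beta]$ or as a specialized scalar; the paper's version is shorter but implicitly assumes the coefficients $c_\gamma$ are $\beta$-free, since otherwise setting $\beta=0$ only kills their constant terms. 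So while the two proofs share their combinatorial core, yours is the more robust packaging.
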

\begin{proof}
First, we consider the expansion of $x^\gamma$ into Lascoux atoms with $k = \max \gamma$.
By Lemma \ref{lem:atomLexLargest}, \[ x^\gamma = \mathcal{L}_\gamma -\sum_{\substack{\delta \prec \gamma \\ \max \delta \leq k}} c_{\gamma,\delta} \beta^{|\delta|-|\gamma|}x^\delta. \]
Let $\delta_1$ be the lexicographically largest term such that $c_{\gamma,\delta_1} \neq 0$, and observe since $\delta_1 \prec \gamma$ and $\max \delta_1 \leq \max \gamma = k$, for any term with $c_{\delta_1,\delta} \neq 0$,  $\max \delta \leq k$.
Then,
\[
x^\gamma = \mathcal{L}_\gamma - c_{\gamma,\delta_1} \beta^{|\delta_1|-|\gamma|} \mathcal{L}_{\delta_1}+ \sum_{\substack{\delta \prec \delta_1 \\ \max \delta \leq k}} (c_{\gamma,\delta_1}c_{\delta_1,\delta}-c_{\gamma,\delta}) \beta^{|\delta|-|\gamma|}x^\delta. \]
We then iterate this process with the lexicographically largest term remaining in the sum, and thus after the $i$th step,
\[ x^\gamma = \mathcal{L}_\gamma + \sum_{j = 1}^i a_{\gamma,\delta_j} \beta^{|\delta_j|-|\gamma|}\mathcal{L}_{\delta_j} + \sum_{\substack{\delta \prec \delta_i \\ \max \delta \leq k}} b_{\gamma,\delta} \beta^{|\delta|-|\gamma|}x^\delta. \]  
Since we take the lexicographically largest term remaining at each step, for all $i$, $\max \delta_i \leq k$ and $\gamma \succ \delta_1 \succ \delta_2 \succ ... \succ \delta_i$
Since there are finitely many compositions lexicographically smaller than $\gamma$ with maximum part at most $k$, this process must terminate and we have a finite expansion of $x^\gamma$ into Lascoux atoms.

Since any monomial $x^\gamma$ has a finite expansion in Lascoux atoms, any $f \in {\sf Pol}$ does as well.
Finally, suppose \[ 0 = \sum_\gamma c_\gamma \mathcal{L}_\gamma \] and by setting $\beta = 0$,
\[ 0 = \sum_\gamma c_\gamma A_\gamma. \]  
Since Demazure atoms form a basis of polynomials, $c_\gamma = 0$ for all $\gamma$, and $\{ \mathcal{L}_\gamma \}$ is a linearly independent set.
\end{proof}

We now define the bijections $\hat{\rho}$ and $\hat{\rho}^{-1}$ used to prove the $G_\lambda$ expansion of Theorem \ref{thm:GLasDecomp}.  
In the special case where there are no free entries, these are precisely the bijections $\rho$ and $\rho^{-1}$ given by Mason in \cite{Mason2008}.

A \emph{set-valued reverse tableaux} is a filling of the shape $\lambda$ with non-empty sets of positive integers with weakly (resp. strictly)  rows (resp. columns).
We use the convention that $G_\lambda$ is the sum over set-valued reverse tableaux. 
Recall ${\sf SetSkyFill}(\gamma)$ is the collection of set-valued skyline fillings of shape $\gamma$ and basement $b_i = i$ and let ${\sf SetRT}(\lambda)$ be the collection of set-valued reverse tableaux of shape $\lambda$.
Then, we define the map \[ \hat{\rho}: \bigcup_{\lambda(\gamma) = \lambda} {\sf SetSkyFill}(\gamma) \rightarrow {\sf SetRT}(\lambda)\] as follows.
First, sort the anchor entries of each column into decreasing order and then place the free entries in the unique box in their column such that the columns remain strictly decreasing and the free entries remain free.

For the inverse $\hat{\rho}^{-1}$, start with an empty skyline diagram with basement $b_i = i$.
Work by columns left to right, top to bottom and place each anchor entry in the first row such that weakly decreasing rows is preserved.  
When all anchor entries have been placed, place the free entries in the highest box in their column such that the rows are weakly decreasing and the free entries remain free.

\begin{example} Given the filling $F = $
\begin{ytableau}
	*(gray) 1 & \textbf{1}  \\ 
	*(gray) 2 \\
	*(gray) 3 & \textbf{3}2 & \textbf{2} & \textbf{2}1\\
	*(gray) 4 & \textbf{4} & \textbf{4}31\\ 
  *(gray) 5 & \textbf{5} \\ 
	\end{ytableau}
	we calculate $\hat{\rho}(F) = $
		\begin{ytableau}
\textbf{5}  & \textbf{4}3  & \textbf{2}1\\
\textbf{4} & \textbf{2}1 \\
 \textbf{3}2  \\
\textbf{1} \\
\end{ytableau}.
\end{example}

\begin{theorem}
\label{thm:rhoBijection}
The map $\hat{\rho}$ is a bijection and $\hat{\rho}^{-1}$ is its inverse.
\end{theorem}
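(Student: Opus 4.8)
The plan is to show that $\hat\rho$ and $\hat\rho^{-1}$ are well-defined maps into the claimed target sets, and then that they are mutually inverse. I would organize the proof as four verification steps. First I would check that $\hat\rho(F)$ actually lies in ${\sf SetRT}(\lambda)$: the shape is correct because sorting the anchor entries of each column into decreasing order fills exactly the same column profile as the skyline diagram (with the basement removed), which has the same multiset of column heights as a reverse tableau of shape $\lambda(\gamma)$; I then need columns to be strictly decreasing (immediate from sorting anchors and the insertion rule for free entries) and rows to be weakly decreasing. The latter is the content-bearing claim: I would argue that the inversion-triple condition (S3) on anchor entries, together with weakly decreasing rows (S2) in $F$, forces the sorted columns to interlace so that rows come out weakly decreasing --- this is exactly Mason's argument for $\rho$ in the no-free-entry case, so I would invoke \cite{Mason2008} for the anchor skeleton and then check the free entries do not spoil weak decrease in rows, using (S4).

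\emph{Second}, I would verify that $\hat\rho^{-1}$ produces a valid element of ${\sf SetSkyFill}(\gamma)$ for the appropriate $\gamma$ with $\lambda(\gamma)=\lambda$. Here the basement $b_i=i$ and the left-to-right, top-to-bottom greedy placement of anchor entries is Mason's $\rho^{-1}$; I would cite that this yields a filling satisfying (M1)--(M3) on anchors, hence (S1)--(S3), and then check that placing each free entry in the highest box of its column keeping rows weakly decreasing and the entry free is precisely condition (S4), so the output is semistandard.

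\emph{Third} and most delicately, I would show $\hat\rho^{-1}\circ\hat\rho=\mathrm{id}$ and $\hat\rho\circ\hat\rho^{-1}=\mathrm{id}$. The anchor entries are handled by Mason's theorem that $\rho,\rho^{-1}$ are mutually inverse, since both of my maps treat anchors exactly as Mason's do. What remains --- and this is the heart of the argument --- is that the free entries are recovered correctly. The key observation is that both directions place a free entry by a \emph{canonical} rule: in $\hat\rho$ a free entry sits in the unique box keeping its column strictly decreasing and staying free, while in $\hat\rho^{-1}$ it sits in the unique highest box keeping its row weakly decreasing and staying free. I would prove that these two rules are inverse by noting that a free entry's \emph{value} and its \emph{column} (equivalently, the set of anchor values in that column) together determine its box uniquely under each rule, and that the anchor skeleton --- which is preserved by the Mason bijection --- fixes both the column and the available slots. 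Thus the free entry's position in one model is a deterministic function of the anchor data and its value, matching in both models.

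\textbf{The main obstacle} I expect is making the free-entry reconstruction rigorous: I must confirm that ``the unique box such that columns remain strictly decreasing and the entry remains free'' in $\hat\rho$, and ``the highest box such that rows remain weakly decreasing and the entry remains free'' in $\hat\rho^{-1}$, genuinely pick out \emph{the same} free entry attached to \emph{the same} anchor, rather than merely the same multiset of values. This requires showing that within a fixed column the free entries are determined by their values alone (no two free entries share a value in a column, by (S1)), and that the row-versus-column descriptions of ``free'' are compatible --- i.e., an entry that is free in the skyline sense (strictly below the column max in its box, dominated appropriately) corresponds to a free entry in the reverse-tableau sense. I would isolate this as a short lemma tracking a single free entry through both maps, which reduces the whole claim, once Mason's anchor bijection is cited, to the compatibility of the two canonical free-placement rules.
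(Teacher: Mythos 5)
Your skeleton matches the paper's proof exactly: anchors are handled by citing Mason's $\rho,\rho^{-1}$, free entries are analyzed separately, and the mutual-inverse step is settled by observing that both maps preserve the anchor/free contents of each column and that those contents determine the object uniquely (for reverse tableaux by column-strictness, for skyline fillings by Mason's column-uniqueness result together with (S4)). Your ``main obstacle'' paragraph is, in substance, that last observation, and it is the easy part --- the paper dispatches it in one short paragraph. The genuine gap is in the two well-definedness steps, which you treat as routine checks but which are the technical core of the paper's proof. For $\hat\rho$, proving that the rows of $\hat\rho(F)$ are weakly decreasing cannot be outsourced to \cite{Mason2008}: it concerns precisely the interaction of free entries with the anchor skeleton, which does not exist in Mason's setting. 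The paper argues by contradiction and counting: if a free entry $\alpha$ landing in box $(i,j)$ of the output $T$ had $\alpha < \max(T(i,j+1))$, then column $j$ of $F$ contains exactly $i$ anchors exceeding $\alpha$ while column $j+1$ contains at least $i$ anchors exceeding $\alpha$; by (S2) each of the latter forces the column-$j$ box to its left to have anchor exceeding $\alpha$, yet the box of $F$ containing $\alpha$ is one of those $i$ boxes and its right neighbor has anchor at most $\alpha$, so only $i-1$ boxes are available for $i$ anchors --- a contradiction.

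More seriously, for $\hat\rho^{-1}$ you claim the placement rule ``is precisely condition (S4), so the output is semistandard.'' That conflates the rule with its feasibility: (S4) only dictates which box to choose among the valid ones, and a priori there may be \emph{no} box in the column in which $\alpha$ both stays free and keeps its row weakly decreasing, in which case $\hat\rho^{-1}(T)$ is simply undefined. The paper proves existence by the mirror counting argument: a free $\alpha$ in box $(i,j)$ of $T$ has $i$ anchors above it in its column, so there are $i$ candidate boxes in column $j$ of $F$ whose anchors exceed $\alpha$, while weak decrease of the rows of $T$ gives at most $i-1$ anchors in column $j+1$ exceeding $\alpha$; these can block at most $i-1$ of the $i$ candidates, so a valid box exists. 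Without these two pigeonhole arguments your plan does not constitute a proof.
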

\begin{proof}
The anchor entries of a semistandard set-valued skyline filling form an ordinary semistandard skyline filling, and likewise the anchor entries of set-valued reverse tableau form an ordinary reverse tableau.
Since $\hat{\rho}$ and $\hat{\rho}^{-1}$ act exactly on the anchor entries by $\rho$ and $\rho^{-1}$ which are well-defined and mutual inverses \cite{Mason2008}, $\hat{\rho}$ and $\hat{\rho}^{-1}$ are well-defined and mutual inverses on the anchor entries.
Thus, since the anchor entries determine the shape of the resulting filling or tableau, $\hat{\rho}$ and $\hat{\rho}^{-1}$ produce fillings and tableaux of the correct shapes.
Thus we only need to show $\hat{\rho}$ and $\hat{\rho}^{-1}$ are well-defined and mutual inverses on the free entries.

{\sf The map $\hat{\rho}$ is well-defined on the free entries.}
 Let $F \in {\sf SetSkyFill}(\gamma)$ and we want to show that $T = \hat{\rho}(F) \in {\sf SetRT}(\lambda(\gamma))$.  
By construction the columns of $T$ are strictly decreasing and so we only need to show that the rows of $T$ are weakly decreasing.  
Suppose to the contrary that row $i$ is not and then there must be a free entry $\alpha \in T(i,j)$ for some $j$ such that \[ \alpha < \max(T(i,j+1)). \]  
Since $\alpha$ is a free entry in the box $(i,j)$, \[\max(T(1,j)) > \max(T(2,j)) > ... > \max(T(i,j)) > \alpha > \max(T(i+1,j)) > ... > \max(T(k,j)). \]  
Since the anchor entries of column $j$ of $T$ are precisely the anchor entries of column $j$ of $F$, there are exactly $i$ anchor entries bigger than $\alpha$ in column $j$ of $F$.  
However, since \[ \alpha < \max(T(i,j+1)) < \max(T(i-1,j+1)) < ... < \max(T(1,j+1), \] there are at least $i$ anchor entries bigger than $\alpha$ in column $j+1$ of $F$.

Since $\alpha$ is a free entry of $F$, it must be in the box of one of the $i$ anchor entries $\max(T(k,j))$ for $k = 1,...,i$.  
However, since the rows of $F$ are weakly decreasing (S2), none of the $i$ anchor entries $\max(T(k,j+1)$ for $k = 1,...,i$ can appear to the right of the box of $\alpha$.
Thus there are at most $i-1$ anchor entries in column $j$ that can appear to the left of $i$ anchor entries in column $j+1$ that are bigger than $\alpha$, contradicting that $F \in {\sf SetSkyFill}(\gamma)$. 

{\sf The map $\hat{\rho}^{-1}$ is well-defined on the free entries.}
Let $T \in {\sf SetRT}(\lambda)$ and we want to show $\hat{\rho}^{-1}(T) = F \in {\sf SetSkyFill}(\gamma)$.  
Since by definition $\hat{\rho}^{-1}$ places free entries in the highest possible row that does not break weakly decreasing, we only need to show that such a row exists.  
Thus suppose $\alpha$ is a free entry of $T$ in box $(i,j)$.  
Since \[ \max(T(1,j)) > \max(T(2,j)) > ... > \max(T(i,j)) > \alpha \] there are $i$ anchor entries in column $j$ of $F$ that are bigger than $\alpha$, and so $\alpha$ can be placed in any of these $i$ boxes and remain free.

Since the rows of $T$ are weakly decreasing, \[ \alpha \geq \max(T(i,j+1)) > \max(T(i+1,j+1) > ... .\]  
Then there are at most $i-1$ anchor entries of column $j+1$ that are bigger than $\alpha$.  
Since these at most $i-1$ entries in column $j+1$ cannot appear to the right of all $i$ possibilities in column $j$, $\alpha$ can be placed in one of the $i$ boxes in column $j$ where the anchor entry is larger than $\alpha$.

{\sf The maps $\hat{\rho}$ and $\hat{\rho}^{-1}$ are mutual inverses.}
Since the columns of  a reverse tableaux are strictly decreasing, there is at most one set-valued reverse tableaux of fixed anchor and free entries in each column.
Thus $\hat{\rho}\hat{\rho}^{-1}(T) = T$ as both $\hat{\rho}$ and $\hat{\rho}^{-1}$ preserve the anchor and free entries of each column of $T$.

For the same reason, $\hat{\rho}^{-1}\hat{\rho}(F) = F$.  In \cite{Mason2008}, Mason showed there is at most one semistandard skyline filling (of any shape) with basement $b_i = i$ with given entries in each column.  Since the anchor entries form a semistandard skyline filling and free entries are required to be in the highest possible row, there is at most one set-valued semistandard skyline filling (of any shape) with basement $b_i = i$ with given anchor and free entries in each column.  
\end{proof}

This proves the decomposition of $G_\lambda$ into Lascoux atoms.
\GLasDecomp*

\begin{remark}
\label{rmk:KeyPoint}
One might expect a semistandard set-valued skyline filling to be a filling such that any selection of one number from each box is a semistandard skyline filling.  
However, then the left tableau below would not be semistandard as the right tableau 
violates the (M3) condition in rows 2 and 3.  Compare this with \cite[Section 1.2]{MillKnutYong2008}.
\begin{center}
	\begin{ytableau} 
*(gray) 1 & 1 & 1 \\
*(gray) 2 \\
*(gray) 3 & 32
\end{ytableau}
\hspace*{1in}
	\begin{ytableau}
	*(gray) 1 & 1 & 1 \\ 
	*(gray) 2 \\
	*(gray) 3 & 2 \\
	\end{ytableau}
	\end{center}
\end{remark}

\section{Quasisymmetric Grothendieck Functions}

Recall that a function $f$ is quasisymmetric if for any positive integers $\alpha_1, ..., \alpha_k$ and strictly increasing sequence of positive integers $i_1 < i_2 < ... < i_k$,
$[x_{i_1}^{\alpha_1}... x_{i_k}^{\alpha_k}] f = [x_1^{\alpha_1}...x_k^{\alpha_k}] f$.
Equivalently $f$ is quasisymmetric if and only if for all $i$, $f$ is invariant under switching $x_i$ and $x_{i+1}$ \emph{except not in monomials that have both}.
Thus when $f$ is modeled by a combinatorial object, we expect $f$ to be quasisymmetric if and only if the combinatorial object is governed by rules depending \emph{only} on relative order.

Therefore Demazure and Lascoux atoms are not quasisymmetric because the basement $b_i = i$ forces the anchor entry at position $(i,1)$, if it exists, to be $i$.
In \cite{HLMvW2011QS}, the quasisymmetric Schur function was originally defined
 $ \mathcal{S}_\alpha = \sum_{\gamma^+ = \alpha} A_\gamma$, where $\gamma^+$ is the composition formed from $\gamma$ by dropping all zeros.
 $\mathcal{S}_\alpha$ was then shown to be the sum over  \emph{semistandard composition tableau}, skyline fillings of a composition $\alpha$ with no basement and strictly increasing entries from top to bottom along the first column.
 
 Thus we define \emph{semistandard set-valued composition tableau} as fillings of a composition shape $\alpha$ with non-empty subsets of positive integers such that
 \begin{itemize}
 \item[(Q1)] entries weakly decrease along rows,
 \item[(Q2)] anchor entries form a semistandard composition tableaux, and
 \item[(Q3)] free entries are in the highest row such that (Q1) is not violated.
 \end{itemize}
 
 Let ${\sf SetCompTab}(\alpha)$ be the collection of semistandard set-valued composition tableaux of shape $\alpha$.
 Adding zero rows to $\alpha$ allows the anchor entries of the first column to be any increasing sequence, and thus \begin{equation} \mathcal{G}_\alpha := \sum_{\gamma^+ = \alpha} \mathcal{L}_\gamma = \sum_{T \in {\sf SetCompTab}(\alpha)} \beta^{|T|-|\alpha|}x^T. \end{equation}
 
Since all the rules governing a semistandard set-valued composition tableau only depend on the relative order of the entries in each box, we expect $\mathcal{G}_\alpha$ to be quasisymmetric.

\begin{proposition}
The function $\mathcal{G}_\alpha$ is quasisymmetric.
\end{proposition}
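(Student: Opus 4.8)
The plan is to prove quasisymmetry directly from the combinatorial model for $\mathcal{G}_\alpha$ as a generating function over ${\sf SetCompTab}(\alpha)$. Recall the equivalent characterization noted just before the statement: a function $f$ is quasisymmetric if and only if, for every $i$, swapping $x_i$ and $x_{i+1}$ fixes $f$ \emph{except} on monomials containing both variables. So it suffices to exhibit, for each fixed $i$, a weight-preserving involution on those tableaux in ${\sf SetCompTab}(\alpha)$ whose content involves exactly one of $\{i, i+1\}$, which interchanges the multiplicity of $i$ with that of $i+1$ (and preserves the $\beta$-exponent $|T| - |\alpha|$). Equivalently, I must show the number of $T \in {\sf SetCompTab}(\alpha)$ of content $\delta$ equals the number of content $s_i \cdot \delta$ whenever $\delta_i = 0$ or $\delta_{i+1} = 0$.

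First I would set up the involution in the style of the classical Bender--Knuth involution, adapted to the set-valued composition-tableau setting. Fix $i$ and consider the entries equal to $i$ or $i+1$ across all boxes. Because entries weakly decrease along rows (Q1) and columns of the associated composition tableau are controlled by (Q2), in any single row the occurrences of $i$ and $i+1$ form a contiguous block; within a box, the relevant content is whether it contains $i$, $i+1$, both, or neither. A box containing both $i$ and $i+1$ is ``frozen'' and contributes symmetrically, so I leave those boxes untouched. The remaining boxes in each row containing exactly one of $i, i+1$ as a \emph{free} or \emph{anchor} entry get re-balanced: in each affected row I count the number $p$ of boxes whose relevant entry is $i$ and the number $q$ whose relevant entry is $i+1$ (ignoring frozen boxes), and I swap so that the new counts are $q$ and $p$, filling from the appropriate side to keep rows weakly decreasing. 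The key point is that this row-local swap preserves the box-membership structure enough that (Q1), (Q2), and (Q3) are restored, and that the total exponent of $\beta$ (the number of free entries) is unchanged because no entries are created or destroyed, only relabeled between $i$ and $i+1$.

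The main obstacle I anticipate is verifying that condition (Q2) — the anchor entries forming a valid semistandard composition tableau — is preserved, together with (Q3), after the swap. Unlike the straight-shape case, composition tableaux have the delicate triple/inversion conditions governing anchor entries, and relabeling an anchor $i \leftrightarrow i+1$ could in principle violate a triple condition in a neighboring row. The cleanest route is to restrict the content to monomials using exactly one of $i, i+1$: under that hypothesis, in each row either all relevant occurrences are $i$ or all are $i+1$ globally constrained, which greatly limits how the anchor structure can interact across rows, so that swapping the label on an entire row's block of $i$'s to $i+1$'s (or vice versa) transports one valid tableau to another. I would verify the anchor/triple conditions are order-preserving under the global relabel $i \leftrightarrow i+1$ restricted to rows containing only one of the two values, which is where the ``exactly one of $i, i+1$'' restriction does the real work.

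An alternative, and likely shorter, route avoids constructing the involution explicitly: I would instead appeal to the defining identity $\mathcal{G}_\alpha = \sum_{\gamma^+ = \alpha} \mathcal{L}_\gamma$ together with the weight-generating structure, and deduce quasisymmetry from the observation that the family $\{\mathcal{L}_\gamma : \gamma^+ = \alpha\}$ collects \emph{all} padding-by-zeros of $\alpha$. Concretely, inserting zero rows into $\alpha$ lets the first-column anchor entries be an arbitrary increasing sequence $i_1 < i_2 < \cdots$, so the generating function over ${\sf SetCompTab}(\alpha)$ depends only on the relative order of entries and not on their absolute values; this is exactly the hypothesis in the remark preceding the statement (``all the rules governing a semistandard set-valued composition tableau only depend on the relative order''). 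Formalizing ``depends only on relative order $\Rightarrow$ quasisymmetric'' via the coefficient-extraction definition $[x_{i_1}^{\alpha_1}\cdots x_{i_k}^{\alpha_k}]\mathcal{G}_\alpha = [x_1^{\alpha_1}\cdots x_k^{\alpha_k}]\mathcal{G}_\alpha$ would then complete the proof, and I expect this relative-order argument to be the intended and most economical one.
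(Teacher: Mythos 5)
Your reduction is the same one the paper uses: it suffices to match fillings whose content has $\beta_i$ copies of $i$ and no $(i+1)$'s with fillings having no $i$'s and $\beta_i$ copies of $i+1$, and the map accomplishing this is the global relabelling that replaces every $i$ by $i+1$ in a tableau containing no $(i+1)$'s (and vice versa). The paper proves the proposition in exactly this way. But your proposal stops where the paper's proof begins: the entire mathematical content is the verification that the relabelled filling $\hat{T}$ again lies in ${\sf SetCompTab}(\alpha)$, and you explicitly defer this (``I would verify the anchor/triple conditions are order-preserving\dots'') without supplying the argument. Concretely, one must check: (i) anchors stay anchors and free entries stay free --- this uses that a free $i$ sits in a box whose anchor is at least $i+2$ (there being no $(i+1)$'s), so after relabelling $i+1$ is still strictly below its anchor; (ii) inversion triples of anchor entries remain inversion triples --- a case analysis on which of the three entries equals $i$, again using the absence of $(i+1)$'s; and (iii) the highest-row condition (Q3) is preserved --- the paper argues by contradiction that if the relabelled free $i+1$ could legally sit in a higher box $(r',c)$, then $\max(T(r',c)) > i \geq \max(T(r',c+1))$, so the original free $i$ could already have sat there, contradicting that $T$ satisfied (Q3). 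Point (iii) in particular is not a formality, and nothing in your proposal supplies it.

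Your ``alternative, likely shorter'' route is circular: the sentence you quote (``all the rules \dots only depend on the relative order'') is the paper's motivating remark \emph{preceding} its proof, not an established lemma, and making it rigorous is precisely the verification (i)--(iii) above; note the paper is careful about this exactly because the analogous-looking rules for $\mathcal{L}_\gamma$ involve the basement $b_i = i$ and fail to be order-theoretic. Separately, the Bender--Knuth framing of your first paragraph is misleading: once you restrict to contents using at most one of $i, i+1$ (which, as you correctly note, is all quasisymmetry requires), no box can contain both values, so there are no ``frozen'' boxes and the row-by-row rebalancing degenerates to the global relabel. Conversely, an unrestricted involution on all of ${\sf SetCompTab}(\alpha)$ swapping the multiplicities of $i$ and $i+1$ cannot exist in general, since it would prove $\mathcal{G}_\alpha$ is symmetric, which is false. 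So the restriction is not merely ``the cleanest route'' --- it is essential, and the real work it leaves behind is the verification the paper carries out and your proposal omits.
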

\begin{proof}
Fix a composition $\alpha$ and fix $i$.  We need to show that $\mathcal{G}_\alpha$ is invariant under switching $x_i$ and $x_{i+1}$ but not in monomials that contain both.
Thus, it suffices to show \[ \#\{F \in {\sf SetCompTab}(\alpha) : F \text{ has content } \beta = (\beta_1,...,\beta_{i-1},\beta_i,0,\beta_{i+2},...,\beta_n) \} \] equals
\[ \#\{F \in {\sf SetCompTab}(\alpha) : F \text{ has content } \hat{\beta} =  (\beta_1,...,\beta_{i-1},0,\beta_i,\beta_{i+2},...,\beta_n) \}. \]  

Suppose $T \in {\sf SetCompTab}(\alpha)$ has no $i+1$s and let $\hat{T}$ be the tableau formed from $T$ by replacing all $i$s replaced with $i+1$s. 
We will show that $\hat{T}$ is semistandard.

Let $\max(T(r,c))$ and ${\tt free}(T(r,c))$ be the anchor entry and set of free entries, respectively, of the box $(r,c)$ in $T$.
First, if $\max(T(r,c)) = i$, then $\max(\hat{T}(r,c)) = i+1$, and thus anchor $i$s become anchor $i+1$s.
Furthermore, if $i \in {\tt free}(T(r,c))$, then $\max(T(r,c)) \geq i+2$ because $T$ has no $i+1$s.  
Thus $i+1 < \max(\hat{T}(r,c)) = \max(T(r,c))$ and so $i+1 \in {\tt free}(\hat{T}(r,c))$.
Thus to show $\hat{T}$ is semistandard we need to show that entries weakly decrease along rows (Q1), anchor entries form a semistandard composition tableaux (Q2), and free entries are in the possible highest row (Q3).

{\sf (Q1) is still valid: }
Since the rows of $T$ are weakly decreasing and $T$ has no $i+1$s, replacing the $i$s with $i+1$s does not break weakly decreasing.
For the same reason, the anchor entries of the first column of $\hat{T}$ are still strictly increasing and entries still do not repeat in a column.
Thus, it only remains to show that inversion triples are still inversion triples and free entries are still in the highest possible row.

{\sf (Q2) is still valid: }
We show replacing an $i$ with $i+1$ does not turn an inversion triple into a coinversion triple. 
Suppose we have $b < a \leq c$ with $b = i$.  Since $T$ has no $i+1$, $a \geq i+2$ and when we replace $b$ with $i+1$, we still have $b < a \leq c$.
Furthermore suppose we have $a \leq c < b$ with $b = i$.
Then clearly we still have $a \leq c < b$ when $b$ is replaced by $i+1$.
Similar arguments work when $a$ or $c$ (or both) is $i$.

{\sf (Q3) is still valid: }
Finally suppose $i \in {\tt free}(T(r,c))$ and we want to show that the $i+1 \in {\tt free}(\hat{T}(r,c))$ is in the highest possible row.
Suppose to the contrary there exists $r' < r$ such that $i+1$ could be free in box $(r',c)$ of $\hat{T}$.  
Then \[ \max(\hat{T}(r',c)) > i+1 \geq \max(\hat{T}(r',c+1)). \] 
However, since $\max(\hat{T}(r',c)) > i+1$, $\max(\hat{T}(r',c)) = \max(T(r',c))$ and thus \[\max(T(r',c)) > i. \]
Furthermore, either $\max(\hat{T}(r',c+1)) = i+1$ and so $\max(T(r',c+1)) = i$, or since $\hat{T}$ has no $i$s, $i > \max(\hat{T}(r',c+1)) = \max(T(r',c+1))$.
In either case, we have \[\max(T(r',c)) > i \geq \max(T(r',c+1)), \] violating that $T$ was a set-valued semistandard composition tableaux as $i$ should have been free in $(r',c)$.

A very similar argument can be applied to a tableaux with no $i$s and replacing all $i+1$s with $i$s, and thus $\mathcal{G}_\alpha$ is in fact quasisymmetric.
\end{proof}

We now complete the proof of Theorem \ref{thm:QuasiGrothBasis} and Proposition \ref{prop:Refinement}.  Let $QSym_n$ be the ring of quasisymmetric polynomials in $n$ variables.

\begin{proposition}
For compositions $\alpha$ with at most $n$ parts, $\{ \mathcal{G}_\alpha(x_1,...,x_n,0,...) \}$ forms a basis of $QSym_n$.
\label{prop:quasiBasisN}
\end{proposition}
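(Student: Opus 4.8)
The plan is to deduce this from the polynomial basis result, Proposition \ref{prop:lasBasis}, together with the fact that the $\mathcal{L}_\gamma$-coefficients of a quasisymmetric polynomial are constant along fibers of the map $\gamma \mapsto \gamma^+$. The key structural point is that passing to only $n$ variables makes everything finite: if $\ell(\alpha)\le n$, then only finitely many weak compositions $\gamma$ with $\gamma^+=\alpha$ are supported in $\{1,\dots,n\}$ (they arise by inserting zeros into at most $n$ slots), so after specializing $x_{n+1}=x_{n+2}=\cdots=0$ the defining sum $\mathcal{G}_\alpha=\sum_{\gamma^+=\alpha}\mathcal{L}_\gamma$ is a genuine finite expression. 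This is exactly why the statement is phrased for $QSym_n$ rather than for all of $QSym$ at once.

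For linear independence I would argue directly. Suppose $\sum_\alpha c_\alpha\,\mathcal{G}_\alpha=0$, the sum over $\alpha$ with $\ell(\alpha)\le n$. Expanding each $\mathcal{G}_\alpha$ by its definition gives $\sum_\gamma c_{\gamma^+}\mathcal{L}_\gamma=0$, where $\gamma$ ranges over weak compositions supported in $\{1,\dots,n\}$. The restrictions to $n$ variables of these $\mathcal{L}_\gamma$ remain linearly independent: by Lemma \ref{lem:atomLexLargest} each has lex-leading monomial $x^\gamma$, and for $\gamma$ supported in $\{1,\dots,n\}$ this monomial survives the specialization and stays lex-largest, so the $\mathcal{L}_\gamma$ are unitriangular against the monomial basis of $\mathbb{Z}[x_1,\dots,x_n]$. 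Hence every $c_{\gamma^+}=0$, i.e. $c_\alpha=0$ for all $\alpha$.

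For spanning, take $f\in QSym_n$. By Proposition \ref{prop:lasBasis} in $n$ variables, $f$ has a unique finite expansion $f=\sum_\gamma c_\gamma\,\mathcal{L}_\gamma$, and the same triangularity shows only $\gamma$ supported in $\{1,\dots,n\}$ occur (the lex-leading monomial $x^\gamma$ cannot involve $x_{>n}$). It then suffices to show $c_\gamma=c_\delta$ whenever $\gamma^+=\delta^+$; granting this, regrouping gives $f=\sum_\alpha c_\alpha\big(\sum_{\gamma^+=\alpha}\mathcal{L}_\gamma\big)=\sum_\alpha c_\alpha\,\mathcal{G}_\alpha$, exhibiting $f$ in the span. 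This fiber-constancy is the content of Proposition \ref{prop:Refinement}(1).

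I expect the fiber-constancy to be the main obstacle, and in particular the risk of circularity if Proposition \ref{prop:Refinement} is itself proved using this result. To keep the argument self-contained I would instead prove it by descent on the power of $\beta$. Setting $\beta=0$ sends $\mathcal{L}_\gamma\mapsto A_\gamma$ and $f\mapsto f|_{\beta=0}$, which is again quasisymmetric; by the corresponding refinement statement for Demazure atoms in \cite{HLMvW2011QS}, the coefficients $c_\gamma|_{\beta=0}$ are constant on $\gamma^+$-fibers. Subtracting $\sum_\alpha (c_\alpha|_{\beta=0})\,\mathcal{G}_\alpha$ from $f$ — which is quasisymmetric since each $\mathcal{G}_\alpha$ is, by the preceding proposition — yields a quasisymmetric polynomial whose $\mathcal{L}_\gamma$-coefficients are all divisible by $\beta$; dividing by $\beta$ and iterating gives a descent. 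This terminates because in $n$ variables every $T\in{\sf SetCompTab}(\alpha)$ has at most $n$ entries per column, so $|T|-|\alpha|$ is bounded and the $\mathcal{L}_\gamma$-coefficients are polynomials in $\beta$ of bounded degree. The only genuinely delicate points are this boundedness and the base case drawn from \cite{HLMvW2011QS}; the independence argument and the regrouping are then purely formal.
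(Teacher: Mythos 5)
Your proof is correct, but it takes a genuinely different route from the paper's. The paper proves spanning by a greedy elimination carried out directly against the $\mathcal{G}_\alpha$'s: quasisymmetry of $f$ forces its lexicographically largest exponent to be a zero-padded composition $\alpha'$, Lemma \ref{lem:atomLexLargest} identifies $x^{\alpha'}$ as the lex-leading monomial of $\mathcal{G}_\alpha(x_1,\dots,x_n,0,\dots)$, and subtracting $[x^{\alpha'}]f\cdot\mathcal{G}_\alpha$ and iterating terminates as in Proposition \ref{prop:lasBasis}; linear independence is then dispatched by a cardinality comparison with the monomial quasisymmetric basis. You instead get independence from unitriangularity of the $\mathcal{L}_\gamma$ against monomials (which is, if anything, tighter than the paper's counting remark), and you get spanning by expanding $f$ in the $\mathcal{L}_\gamma$ and proving fiber-constancy of the coefficients by $\beta$-adic descent to the $\beta=0$ case. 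Your descent is logically sound: it uses only the quasisymmetry of $\mathcal{G}_\alpha$ (proved before this proposition), it correctly avoids the circularity with Proposition \ref{prop:Refinement} (which the paper deduces from this result, not conversely), and termination holds because the finitely many coefficients $c_\gamma\in\mathbb{Z}[\beta]$ have bounded $\beta$-degree. What your route buys is a conceptual reduction of the $K$-theoretic statement to the known Demazure one; what it costs is the imported base case: the finite-variable fiber-constancy for Demazure atoms is not literally \cite[Proposition 5.5]{HLMvW2011QS} (that is the basis statement for $QSym$), so it must be assembled from their basis theorem together with uniqueness of Demazure-atom expansions --- the same short deduction by which Proposition \ref{prop:Refinement} follows here. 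One shared elision: both you and the paper use without comment that $\mathcal{L}_\gamma(x_1,\dots,x_n,0,\dots)=0$ whenever $\gamma$ has a nonzero part in a row beyond the $n$th (true because first-column anchor entries equal their row index, so every such filling contributes some $x_j$ with $j>n$); this is what identifies the specialized $\mathcal{G}_\alpha$ with the finite sum over $\gamma$ with at most $n$ parts.
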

\begin{proof}
Given a quasisymmetric polynomial $f$ in $n$ variables, let $\gamma$ be the lexicographically largest weak composition such that $x^\gamma$ appears in $f$ with nonzero coefficient.
For a composition $\alpha$ with at most $n$ parts, let $\alpha'$ be the weak composition formed by adding $0$s to the beginning of $\alpha$ until $\alpha'$ has $n$ parts.
If $\gamma^+ = \alpha$, we claim $\gamma = \alpha'$.

Since $f$ is quasisymmetric, \[ [x^\gamma] f = [x^\delta] f \] for all $\delta$ with at most $n$ parts such that $\gamma^+ = \delta^+$.  
However, $\alpha'$ is the lexicographically largest such $\delta$ such that $\gamma^+ = \delta^+$, and since $\gamma$ was the lexicographically largest term appearing in $f$, we have $\gamma = \alpha'$.

By the definition of $\mathcal{G}_\alpha$, 
\[ \mathcal{G}_\alpha(x_1,...,x_n,0,...,) = \sum_{\gamma^+ = \alpha} \mathcal{L}_\gamma \] where the sum runs over $\gamma$ with at most $n$ parts.
By lemma \ref{lem:atomLexLargest}, the lexicographically largest monomial of $\mathcal{G}_\alpha$ is $x^{\alpha'}$, and since every $\gamma$ is the sum above has the same maximum part length, the power of each variable is bounded for every term of $\mathcal{G}_\alpha$.

Thus by the same argument as the proof of theorem \ref{prop:lasBasis}, for each $\alpha'$, there are finitely many possibilities of terms that can be introduced by subtracting $[x^{\alpha'}] f \cdot \mathcal{G}_\alpha(x_1,...,x_n,0,...)$ as only lexicographically smaller terms with bounded powers of each variable are introduced. 
Then the process of subtracting $\mathcal{G}_\alpha$ with appropriate coefficient for $x^{\alpha'}$ the lexicographically largest term appearing will terminate and any quasisymmetric polynomial in $n$ variables can be expanded in the $\mathcal{G}_\alpha$s.  

Finally, the monomial quasisymmetric basis for $QSym_n$ is indexed by compositions $\alpha$ with at most $n$ parts and since we have a spanning set with the same (finite) cardinality, $\{ \mathcal{G}_\alpha \}$ as $\alpha$ runs over compositions with at most $n$ parts is a basis for $QSym_n$.
\end{proof}

\QuasiGroth*

\begin{proof}
Let $f$ be a quasisymmetric function.  By proposition \ref{prop:quasiBasisN}, for any $n$,
\[ f(x_1,...,x_n,0,...) = \sum_\alpha c_\alpha\mathcal{G}_\alpha(x_1,...,x_n,0,...). \]
The expansion of the piece of $f$ of degree at most $n$ is determined by $f(x_1,...,x_n,0,...)$.  
Thus as $n \rightarrow \infty$, the expansion of $f$ into $\mathcal{G}_\alpha$ stabilizes and $\{ \mathcal{G}_\alpha \}$ is a basis for $QSym$.
\end{proof}

These new bases provide a method for determining when a function is quasisymmetric (resp. symmetric), and then furthermore $\mathcal{G}_\alpha$-positive (resp. $G_\lambda$-positive) . 

\Refinement*
\begin{proof}
Consider  \[ f  = \sum_\alpha c_\alpha \mathcal{G}_\alpha = \sum_\alpha c_\alpha \sum_{\gamma^+ = \alpha} \mathcal{L}_\gamma. \]
Since $\{ \mathcal{G}_\alpha \}$ is a basis of $QSym$, $f$ is quasisymmetric if and only if it has an expansion in the $\mathcal{G}_\alpha$'s, and as above if and only if $c_\gamma = c_\delta$ for all $\gamma^+ = \delta^+$.
Furthermore, in this case, $f$ is $\mathcal{G}_\alpha$-positive if and only if it is $\mathcal{L}_\gamma$-positive.
Likewise, consider 
\[ f = \sum_\lambda c_\lambda G_\lambda = \sum_\lambda c_\lambda \sum_{\lambda(\gamma) = \lambda}  \mathcal{L}_\gamma.\]
By the same argument, $f$ is symmetric if and only if $c_\gamma = c_\delta$ for all $\lambda(\gamma) = \lambda(\delta)$ and if $f$ is symmetric, $f$ if $G_\lambda$-positive if and only if it is $\mathcal{L}_\gamma$-positive.
\end{proof}

\section{Schur Expansion of $G_\lambda$}
We now provide a new link between ordinary and set-valued tableaux by giving a bijection between the combinatorial objects in C. Lenart's Schur expansion of $G_\lambda$ and semistandard set-valued tableaux.  This section is independent from the remainder of the paper.
Let \[ {\sf S}(\lambda) = \{ F : F \text{ is a semistandard set-valued tableaux of shape } \lambda \}\] and \[ {\sf L}(\lambda) = \left\{ (T,U) : 
\begin{array}{l} T \text{ is row and column strict of shape $\mu / \lambda$ } \\ \text{ \hspace*{.25in} with entries of row $i$ between 1 and $i-1$, and} \\ U \text{ is semistandard of shape $\mu$} \end{array} \right\}. \] 
Then we have the following two decompositons of $G_\lambda$ due to C. Lenart and A. Buch, respectively.
\begin{theorem}[\cite{Lenart2000}, Theorem 2.2]
\[ G_\lambda = \sum_{\lambda \subseteq \mu} \beta^{|\mu| - |\lambda|} g_{\lambda,\mu}s_\mu  = \sum_{(T,U) \in {\sf L}(\lambda)} \beta^{|T|} x^U \] where $g_{\lambda,\mu} = \#\{ T : (T,U_0) \in L(\lambda) \}$ for $U_0$, a fixed semistandard tableaux of shape $\mu$.
\end{theorem}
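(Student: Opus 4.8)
The plan is to deduce the statement from A.~Buch's generating-function description of $G_\lambda$ as a sum over semistandard set-valued tableaux, $G_\lambda = \sum_{F \in {\sf S}(\lambda)} \beta^{|F|-|\lambda|} x^F$, by exhibiting a weight-preserving bijection between ${\sf S}(\lambda)$ and ${\sf L}(\lambda)$.

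First I would dispatch the second equality, which is pure bookkeeping. Grouping the pairs $(T,U) \in {\sf L}(\lambda)$ by the common shape $\mu$ (with $T$ of skew shape $\mu/\lambda$ and $U$ of shape $\mu$), the conditions defining an admissible $T$ — row- and column-strictness together with the flag that entries of row $i$ lie in $\{1,\dots,i-1\}$ — depend only on the skew shape $\mu/\lambda$, not on $U$. Hence the inner sum factors as $\left(\sum_T \beta^{|T|}\right)\left(\sum_U x^U\right)$. Since each admissible $T$ is an ordinary single-entry filling of $\mu/\lambda$, we have $|T| = |\mu|-|\lambda|$ for every such $T$, so $\sum_T \beta^{|T|} = \beta^{|\mu|-|\lambda|} g_{\lambda,\mu}$, while $\sum_U x^U = s_\mu$ as $U$ ranges over semistandard tableaux of shape $\mu$. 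This yields the Schur expansion immediately.

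The crux is the first equality, for which I would use the uncrowding operation to build the bijection ${\sf S}(\lambda) \to {\sf L}(\lambda)$. Given $F \in {\sf S}(\lambda)$, iteratively remove the excess (non-anchor) entries from the multiply-occupied boxes and reinsert each by RSK-style row insertion into the rows above, recording in a second diagram $T$ the position of the box created at each bumping step. This outputs an ordinary semistandard tableau $U$ of some shape $\mu \supseteq \lambda$ together with a record $T$ supported on $\mu/\lambda$. I would then check two things: that $(T,U)$ really lies in ${\sf L}(\lambda)$, where the nontrivial points are the column-strictness of $T$ and the flag condition (a box of $T$ in row $i$ carries a label in $\{1,\dots,i-1\}$), both of which are enforced by tracking the row indices at which bumped entries create new boxes; and that the map is weight-preserving, since insertion only permutes the multiset of entries ($x^U = x^F$) and $T$ records exactly the $|F|-|\lambda|$ excess entries ($|T| = |F|-|\lambda|$).

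Finally I would invert the construction by the reverse crowding operation: the record $T$ specifies which box of $\mu/\lambda$ was created last and with what label, reverse row-insertion returns the corresponding entry to the row it was bumped from, and crowding reattaches it as an excess entry. The main obstacle is this reversibility together with the flag condition: one must maintain the invariant that the labels of $T$ encode exactly the order in which excess entries were reinserted and the boxes they came to occupy, so that crowding can unambiguously restore each entry, and one must verify that column-strictness of $T$ and semistandardness of $U$ persist through every bumping step. Once these invariants are in place the map is a bijection, and summing the identity $\beta^{|F|-|\lambda|} x^F = \beta^{|T|} x^U$ over ${\sf S}(\lambda)$ completes the proof.
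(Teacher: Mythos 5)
Your proposal follows essentially the route that the paper itself supports: the statement is quoted from Lenart, but the paper's Section 4 (the iterated Reiner--Tenner--Yong ${\tt uncrowd}$/${\tt crowd}$ maps, giving a weight-preserving bijection between ${\sf S}(\lambda)$ and ${\sf L}(\lambda)$) combined with Buch's set-valued formula amounts to exactly the proof you outline, and your factorization argument for the second equality is the correct bookkeeping. One concrete correction, however: the excess entries must be row-inserted into the row \emph{below} their box (the next higher-indexed row), not ``the rows above.'' Taken literally, upward insertion fails at once --- an excess entry in row $1$ has no row above it, and the flag condition you invoke only makes sense for downward motion, since the label recorded in a new box in row $k$ is $k-i$ where $i<k$ is the row the entry came from, which is precisely why the entries of row $k$ of $T$ lie in $\{1,\dots,k-1\}$. (Column strictness of the set-valued tableau, $\max$ of a box being smaller than the entry below it, is also what guarantees the downward insertion behaves well.) With the direction fixed, the remaining work is what you correctly identify: semistandardness of $U$ and row/column strictness of $T$ through each bump (the paper uses Fulton's Row Bumping Lemma, since uncrowding one box inserts a strictly decreasing sequence and hence adds a vertical strip), and, for the inverse, recovering the order of reversal from $T$ alone --- the paper does this by replacing each label $x$ in row $k$ of $T$ by the origin row $k-x$ and crowding back from the lowest inner corner carrying the minimal origin row, so that crowding undoes the uncrowding steps in exactly the opposite order.
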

\begin{theorem}[\cite{Buch2002}, Theorem 3.1]
\[ G_\lambda = \sum_{F \in {\sf S}(\lambda)} \beta^{|F|-|\lambda|}x^F. \]
\end{theorem}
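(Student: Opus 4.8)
\section*{Proof proposal}

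The plan is to deduce Buch's formula from Lenart's, which the preceding theorem makes available, by constructing an explicit weight-preserving bijection
\[ \Phi : {\sf S}(\lambda) \longrightarrow {\sf L}(\lambda), \qquad F \longmapsto (T,U). \]
If such a $\Phi$ satisfies $x^U = x^F$ and $|T| = |F| - |\lambda|$, then the monomial $\beta^{|F|-|\lambda|}x^F$ contributed by $F$ equals the monomial $\beta^{|T|}x^U$ contributed by its image, so summing over $\Phi$ identifies $\sum_{F \in {\sf S}(\lambda)} \beta^{|F|-|\lambda|}x^F$ with $\sum_{(T,U) \in {\sf L}(\lambda)} \beta^{|T|}x^U$. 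The latter sum is $G_\lambda$ by Lenart's theorem, which is exactly the claim.

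I would build $\Phi$ from the uncrowding operator of Reiner, Tenner, and Yong \cite{ReinTenYong16}. Starting from $F \in {\sf S}(\lambda)$, uncrowd one free entry at a time: repeatedly select the largest free entry in a suitable box, delete it, and reinsert it into the next row down by row insertion, letting the bump cascade downward until it terminates in a newly created cell. Each such step enlarges the shape by one box while leaving the multiset of all entries unchanged; after all free entries are exhausted one is left with an ordinary semistandard tableau $U$ of some shape $\mu \supseteq \lambda$ together with a recording tableau $T$ on the skew shape $\mu/\lambda$, where the cell created at a given step is labelled by the index of the row from which the uncrowded entry was removed. Because no entry values are altered, $x^U = x^F$; because each of the $|F|-|\lambda|$ free entries produces exactly one new cell, $|\mu| - |\lambda| = |F| - |\lambda| = |T|$, giving the required weight match.

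The crux of the argument is to show that the output $(T,U)$ genuinely lies in ${\sf L}(\lambda)$, i.e.\ that $T$ is row- and column-strict of shape $\mu/\lambda$ and that every entry in its $i$-th row lies in $\{1,\dots,i-1\}$. The flag bound is the heart of the matter: a cell created in row $i$ records the row an entry fell \emph{from}, which is strictly above $i$, so its label is at most $i-1$ (in particular row $1$ of $\mu/\lambda$ stays empty and $\mu_1 = \lambda_1$, as already forced by the $\{1,\dots,0\}=\varnothing$ convention). Verifying this together with the row- and column-strictness of $T$ requires tracking how successive downward insertions interact; I would do this by induction on the number of free entries, maintaining as an invariant that an entry occupying row $r$ can only deposit its recorded cell in some row $>r$. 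Matching the resulting labelling precisely against Lenart's indexing convention is the step I expect to demand the most care.

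Finally, to see that $\Phi$ is a bijection I would exhibit its inverse as the crowding (reverse-insertion) operator: reading the cells of $\mu/\lambda$ in the order opposite to their creation, reverse-bump each recorded entry back up one row and reabsorb it as a free entry, contracting the shape back to $\lambda$ and recovering $F$. Since uncrowding and crowding are mutually inverse at the level of a single insertion step \cite{ReinTenYong16}, the two global maps are mutually inverse, so $\Phi$ is the desired bijection and the proof is complete.
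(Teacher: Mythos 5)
Your overall strategy is exactly the one this paper embodies: the statement itself is quoted from Buch, and Section~4 of the paper proves precisely the weight-preserving bijection you propose (the theorem that ${\tt uncrowd}$ is a bijection between ${\sf S}(\lambda)$ and ${\sf L}(\lambda)$), which together with Lenart's expansion yields Buch's formula. Your weight bookkeeping ($x^U = x^F$ and $|T| = |\mu|-|\lambda| = |F|-|\lambda|$) also agrees with the paper's.

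However, your recording convention contains a genuine error, and it is not merely a matter of ``care in matching Lenart's indexing'': labelling the created cell by the source row $i$ of the uncrowded entry produces a tableau that is in general \emph{not} column-strict, so your $\Phi$ does not land in ${\sf L}(\lambda)$ and your claimed invariant fails. Concretely, take $\lambda = (2)$ and $F$ with first box $\{1,2\}$ and second box $\{3,4\}$. Uncrowding the $4$ creates a cell in row $2$, labelled $1$ under your convention; uncrowding the $2$ then bumps the $4$ out of row $2$ into a new cell in row $3$, and since that cascade also started in row $1$, your label is again $1$ --- column $1$ of $T$ carries two equal entries. The paper instead records $k-i$, the row of the created cell minus the source row, which gives labels $1$ and $2$ here; its proof shows this convention is always row- and column-strict (uncrowding one row inserts a strictly decreasing sequence, hence adds a vertical strip by the Row Bumping Lemma, and the cell above a new cell in row $k$ came from a source row $i' \geq i$, so its label is at most $k-1-i < k-i$). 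Your labelling is related to the correct one by $x \mapsto k-x$ in row $k$ --- this is exactly the auxiliary tableau $\tilde{T}$ the paper uses to define ${\tt crowd}$ --- but the object it produces has strictly decreasing rows and only \emph{weakly} decreasing columns, which is not Lenart's indexing set. A second, smaller gap: your inverse reads cells ``in the order opposite to their creation,'' but an inverse map must recover that order from the pair $(T,U)$ alone; the paper does this intrinsically (crowd the lowest inner corner carrying the minimal value of $\tilde{T}$), and without such a prescription your ${\tt crowd}$ is not defined as a map on ${\sf L}(\lambda)$.
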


We give a bijection $ {\tt uncrowd}: {\sf L}(\lambda) \rightarrow {\sf S}(\lambda)$ using a repeated application of the recent uncrowding operation of V. Reiner, B. Tenner, and A. Yong \cite{ReinTenYong16}.
Given a set-valued tableaux $F$ of shape $\lambda$, begin with $T = \lambda/\lambda$ and $U = F$.  
Read the boxes of $U$ from bottom to top, right to left.  
While the current box has more than one number, \emph{uncrowd} the box by iteratively removing the largest number from the box and RSK-inserting into the row below.  
During each step, a box will be added to $U$, and in the corresponding box of $T$ record $k - i$ where $k$ is the row of the new box and $i$ is the original row of the number inserted.

\begin{example}
Let $F = \begin{ytableau} 1 & 124 & 4 \\ 45 \end{ytableau}$.  Then ${\tt uncrowd}(F)$ is calculated as follows:
\[\left(\begin{ytableau} *(gray) \, & *(gray)\, & *(gray)\, \\  *(gray)  \end{ytableau}\,, \begin{ytableau} 1 & 124 & 4 \\ 45 \end{ytableau}\right) \Rightarrow
\left(\raisebox{.15in}{\begin{ytableau} *(gray) \, & *(gray)\, & *(gray)\, \\  *(gray)  \\ 1\end{ytableau}}\,, \raisebox{.15in}{\begin{ytableau} 1 & 124 & 4 \\ 4 \\ 5 \end{ytableau}}\right) \Rightarrow\]
\[
\left(\raisebox{.15in}{\begin{ytableau} *(gray) \, & *(gray)\, & *(gray)\, \\  *(gray) & 1  \\ 1\end{ytableau}}\,, \raisebox{.15in}{\begin{ytableau} 1 & 12 & 4 \\ 4  & 4\\ 5 \end{ytableau}}\right) \Rightarrow
\left(\raisebox{.275in}{\begin{ytableau} *(gray) \, & *(gray)\, & *(gray)\, \\  *(gray) & 1  \\ 1 \\ 3\end{ytableau}}\,, \raisebox{.275in}{\begin{ytableau} 1 & 1 & 4 \\ 2  & 4\\ 4 \\ 5 \end{ytableau}}\right)
= {\tt uncrowd}(F)
\]
\end{example}

\begin{theorem}
The map ${\tt uncrowd}$ is a bijection from ${\sf S}(\lambda)$ to ${\sf L}(\lambda)$.
\end{theorem}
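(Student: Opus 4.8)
The plan is to establish the theorem by constructing the inverse \emph{crowding} operation {\tt crowd} and checking that {\tt uncrowd} and {\tt crowd} are mutually inverse, well-defined maps between ${\sf S}(\lambda)$ and ${\sf L}(\lambda)$. Since each individual move used in {\tt uncrowd} is the single-step operation of Reiner, Tenner, and Yong, which is already known to preserve semistandardness and to be reversible, the substance of the argument is bookkeeping: I must verify that the recording tableau $T$ produced by the repeated procedure genuinely lands in ${\sf L}(\lambda)$, and that $T$ records exactly the data needed to undo the uncrowding uniquely.

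First I would check that {\tt uncrowd} is well-defined, i.e.\ that it outputs a pair $(T,U)\in {\sf L}(\lambda)$. The flag condition is the easiest piece. Each uncrowding move extracts a free entry from a box in some row $i$ and, after a chain of RSK bumps, creates a new box at the bottom of some column in a row $k>i$; the value $k-i$ recorded in that cell of $T$ then satisfies $1\le k-i\le k-1$, which is precisely the requirement that row $k$ of $T$ have entries in $\{1,\dots,k-1\}$. That $U$ is an honest semistandard tableau of some shape $\mu\supseteq\lambda$ follows from the single-step result of Reiner, Tenner, and Yong: after all free entries are exhausted $U$ has one entry per box and every step preserved semistandardness, while the shape grows from $\lambda$ by exactly one box per free entry, so $U$ has shape $\mu$ with $|\mu|-|\lambda|=|F|-|\lambda|$ and $T$ has shape $\mu/\lambda$.

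The technical heart, and the step I expect to be the main obstacle, is the row- and column-strictness of $T$. For this I would prove a lemma tracking how the source row $i$ and the terminal row $k$ of the newly created box evolve as the reading sweeps through $F$ bottom-to-top and right-to-left. The reading order is chosen precisely so that entries uncrowded later produce boxes whose recorded values compare correctly with those produced earlier; I expect the recorded values to strictly increase down each column of $T$ and strictly increase along each row, which is the reflection at the level of recording values of the standard fact that successive RSK insertions trace non-crossing bumping paths. Controlling the interaction between \emph{consecutive} insertions, rather than analyzing a single insertion in isolation, is the delicate point here.

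Finally I would build {\tt crowd} by reversing the reading order and using $T$ to localize, at each stage, the box of $U$ to be removed and reverse-bumped: a label $t$ sitting in row $k$ of $T$ signals a reverse insertion that terminates in row $k-t=i$, where the displaced value rejoins a box as a free entry. Because the single RTY step is reversible and the column through which a reverse bumping path passes is determined by the data in $U$, each crowding move is forced, so {\tt crowd} is well-defined and produces a tableau with one more free entry and one fewer box. Checking that the full composite {\tt crowd}$\circ${\tt uncrowd} returns to shape $\lambda$ and recovers the original free entries, together with the symmetric verification that {\tt uncrowd}$\circ${\tt crowd} is the identity on ${\sf L}(\lambda)$, then yields that {\tt uncrowd} is a bijection.
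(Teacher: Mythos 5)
Your overall architecture matches the paper's proof exactly: show {\tt uncrowd} lands in ${\sf L}(\lambda)$, construct {\tt crowd} by reversing the reading order, and check the two maps are mutually inverse. The flag condition and shape bookkeeping you give are fine. But at the two places you yourself flag as the technical heart, you leave placeholders rather than arguments, and these are genuine gaps, not routine bookkeeping.

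First, the strictness of $T$. You say you ``expect'' the recorded values to increase along rows and down columns as a ``reflection of non-crossing bumping paths,'' but no lemma is stated or proved, and the non-crossing heuristic does not by itself give the claim: the comparisons needed are between boxes created from \emph{different} source rows, not between consecutive insertions. The working argument is more specific. When a single row $i$ is uncrowded right to left, the extracted entries form a strictly decreasing sequence, so by the Row Bumping Lemma (Fulton) they add a \emph{vertical strip} below row $i$; hence each target row $k$ receives at most one box with recorded value $k-i$. Since rows are processed bottom to top, any entry already sitting in row $k$ of $T$ came from a source row $i'>i$ and so equals $k-i'<k-i$; as RSK appends new boxes at row ends, row $k$ of $T$ stays strictly increasing, and the box above (created from some source row in $\{i,\dots,k-2\}$) carries a value at most $k-1-i<k-i$, giving column strictness. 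None of this is in your proposal. Second, {\tt crowd}: calling each move ``forced'' hides three things the paper must prove. You do not specify \emph{which} box of $U$ to remove at each stage (the paper converts $T$ to the tableau $\tilde{T}$ of source rows $k-x$, shows $\tilde T$ has strictly decreasing rows and weakly decreasing columns so its minimal value sits at an inner corner, and crowds the lowest such corner); you do not show that the displaced value $x$ leaving row $i+1$ actually has a legal landing box $b$ in row $i$ with ${\tt max}(b)<x\le{\tt min}(\text{box right of }b)$ (the paper exhibits the box above $x$'s original position as a witness and uses that $x$ exceeds every free entry of row $i$); and you do not check column strictness survives the reinsertion. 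A smaller issue: deferring semistandardness of $U$ to the Reiner--Tenner--Yong single-step result is not automatic, since their operation lives on barely set-valued tableaux, whereas here the rows above row $i$ are still genuinely set-valued mid-algorithm; the paper verifies directly that column strictness between rows $i$ and $i+1$ is preserved because the extracted entry is the largest in its box and lands weakly left of its original column.
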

\begin{proof}
We prove this by showing that ${\tt uncrowd}$ is a well-defined map to ${\sf L}(\lambda)$ and by constructing its inverse, ${\tt crowd}$.

\noindent{\sf ${\tt uncrowd}$ is well-defined:} 
The algorithm is defined since by uncrowding $U$ from bottom to top, we always insert values into rows that are no longer set-valued and thus RSK is defined.
Furthermore, when the algorithm terminates, $U$ is no longer set-valued as each box is uncrowded until it has exactly one entry.  
Thus we need to show that after each step of ${\tt uncrowd}$, $U$ is semistandard and $T$ is row and column strict with the entries of row $i$ between 1 and $i-1$.
Note that initially these properties hold since $U=F$ is semistandard and $T$ is empty.
Then let $U_i$ be the tableaux formed by the rows of $U$ strictly below row $i$.
\begin{itemize}
\item $U$ has partition shape $\mu$: 
Consider uncrowding an entry $x$ from row $i$.
Since $U$ is column strict, either there is no box below the entry $x$ or the box below the entry $x$ contains $y > x$.
Thus RSK-insertion will either add a box to row $i+1$ weakly left of the box of $x$, or will bump from a box weakly left of the box of $x$.
Thus RSK-insertion cannot add a box to row $i+1$ if it has the same length as row $i$.  
If it does not add a box to row $i+1$, the remaining operation is the classical RSK operation on $U_i$, and thus again $U$ still has partition shape.
 \\

\item $U$ is semistandard: 
Since RSK produces semistandard tableaux, after uncrowding $x$ from row $i$, $U$ is set-valued semistandard up to and including row $i$ and $U_i$ is semistandard.
Thus the only possible concern is that column-strictness is broken between rows $i$ and $i+1$.
Since $x$ is the largest extra entry in its box, it is weakly bigger than all values weakly to the left in row $i$.  
Then, if $x$ ends up in row $i+1$ weakly left of its original position, column-strictness is preserved.
However as above, before uncrowding $x$, the position below $x$ either must be empty or contain $y > x$ and thus column-strictness is preserved.
\\

\item $T$ has shape $\mu / \lambda$: By definition, the skew-shape of $T$ is $\lambda$.  Then, boxes to $T$ are added exactly where they are added to $U$, and so they have the same outer-shape.
\\

\item Row $k$ of $T$ has entries between 1 and $k-1$: The entries of row $k$ of $T$ are $k-i$ where $i$ is the row a value that ended in row $k$ originated in.  
Since $1 \leq i \leq k-1$ for values that end in row $k$, the values of row $k$ of $T$ are between $1$ and $k-1$. 
\\

\item $T$ is row strict: 
As row $i$ of $U$ is uncrowded from right to left, a series of strictly decreasing entries are inserted into $U_i$. 
Thus by Fulton's Row Bumping Lemma (pg. 9 \cite{Fulton1997}), a vertical strip is added to $U_i$ in this process.
Thus for each row $k$ of $T$, $k-i$ appears at most once since at most once value from row $i$ can end in row $k$.  
Furthermore, the entries already in row $k$ of $U$ must have come from rows $i+1,...,k-1$ and thus before uncrowding row $i$, the entries of row $k$ of $T$ can only be $1, ..., k-(i+1)$. 
Since the new box is placed at the end of row $k$ of $T$ and given the value $k-i$, $T$ remains row strict.
 \\ 

\item $T$ is column strict: Again consider uncrowding $x$ from row $i$ where $x$ ends up in a new box row $k$.  
The box in row $k-1$ above the new box must have been added before this point and so initially must have come from rows $i, ..., k-2$.
Thus the value in this box in row $k-1$ must be one of $k-1-(k-2), ..., k-1-i$ and recording $k-i$ in the new box preserves column strictness.
\end{itemize}

\noindent {\sf Definition of ${\tt crowd}$:} We now define the crowding operation ${\tt crowd}: {\sf L}(\lambda) \rightarrow {\sf S}(\lambda)$ again by iterating the process of V. Reiner, B. Tenner, and A. Yong.
Given a pair $(T,U)$, consider the tableaux $\tilde{T}$ where each $x$ in row $k$ of $T$ is replaced with $k - x$.  
Thus $\tilde{T}$ has strictly decreasing rows and weakly decreasing columns, and so the minimal value of $\tilde{T}$ appears at an inner corner (and possibly elsewhere).
Let $i$ be the minimal value of $\tilde{T}$ and choose the lowest inner corner of $\tilde{T}$ containing $i$.  
Reverse-RSK from this corner in $U$, but stop after reverse-bumping a value, say $x$, out of row $i+1$.  
Instead of bumping a value out of row $i$, add $x$ to the unique box $b$ of row $i$ such that ${\tt max}(b) < x \leq {\tt min}(\text{box right of } b)$.
Then remove the now empty corner from $U$ and the corresponding box from $\tilde{T}$.
Repeat this process until $\tilde{T}$ is empty.

\noindent {\sf ${\tt crowd}$ is well-defined}: 
This algorithm terminates when $\tilde{T}$ has no further inner corners, i.e. when $U$ is a set-valued tableaux of shape $\lambda$. 
Furthermore, since rows are crowded from top to bottom, the reverse-RSK part of the algorithm only operates on rows that are not set-valued, and thus is well-defined.  
Thus the only thing that needs to be shown is that when adding a value $x$ to row $i$ (instead of bumping from row $i$) row $i$ of $U$ remains weakly increasing and the columns of $U$ remain strictly increasing between rows $i$ and $i+1$.

Consider crowding all boxes corresponding to $i$ in $\tilde{T}$.  
Since $\tilde{T}$ has strictly decreasing rows, these boxes must form a vertical strip that is removed from $U_i$ from bottom-to-top by reverse-RSK.
Thus since RSK and reverse-RSK are inverses, by the Row Bumping Lemma, entries are added to row $i$ in strictly increasing order.  
From the definition of ${\tt crowd}$, row $i$ remains weakly increasing if such a box $b$ exists, so we just need to show that $b$ exists.  
However before being bumped, $x$ was in row $i+1$ and thus strictly bigger than ${\tt max}(b_0)$ where $b_0$ is the box directly above $x$.  
By above, $x$ is bigger than any extra entry of row $i$ and so if $x > {\tt min}(b')$ for some box $b'$ of row $i$, $x > {\tt max}(b')$.
Thus $b$ is the rightmost box of row $i$ such that $x > {\tt min}(b)$, which exists as $b_0$ is one such box.

Now we simply need to show that $x$ is smaller than the value in the box immediately below $x$.
By above, $x$ ends up weakly right of where it was originally.  
Furthermore, if $x$ got bumped out by $y$, then $y > x$ and so $x$ is less than everything weakly right of its original position.
Thus column strictness is preserved.

\noindent{\sf {\tt uncrowd} and ${\tt crowd}$ are weight-preserving:} This is clear as numbers in $U$ are moved but not changed.

\noindent{\sf {\tt uncrowd} and ${\tt crowd}$ are mutual inverses:} When uncrowding row $i$, boxes are added to $U$ from top to bottom.  
Thus to reverse this process, when crowding into row $i$, boxes are removed from $U$ from bottom to top.  
Similarly, rows are uncrowded from bottom to top and thus to reverse this process, rows are crowded from top to bottom.  
Finally when uncrowding, the maximum entry of a box is removed and thus when crowding the new entry is always the maximum in its box. 

\end{proof}

\section{Conjectures}
We have defined the Lascoux atoms combinatorially in terms of set-valued skyline fillings, but there is also a natural definition based on isobaric divided difference operators.
Let $s_i$ act on polynomials by switching $x_i$ and $x_{i+1}$.
Then, we have the operators 
\[ \partial_i = \frac{1-s_i}{x_i - x_{i+1}} \hspace*{1in} \pi_i = \partial_i x_i \hspace*{1in} \hat{\pi}_i = \pi_i - 1. \]
These operators all satisfy the braid relations and so given $w \in S_n$, we define $\partial_w$ by $\partial_w = \partial_{a_1}...\partial_{a_k}$ (and $\pi_w$ and $\hat{\pi}_w$ analogously) where $a_1...a_k$ is any reduced word of $w$.
Given a composition $\gamma$, let $w(\gamma)$ be the shortest permutation that sends $\lambda(\gamma)$ to $\gamma$.  For example, for $\gamma = 1021$, $\lambda(\gamma)  = 211$ and $w(\gamma) = 3142$.  

The Demazure character is $\kappa_\gamma = \pi_{w(\gamma)} x^{\lambda(\gamma)}$ and the Demazure atom is $A_\gamma = \hat{\pi}_{w(\gamma)} x^{\lambda(\gamma)}$ where $x^\lambda = x_1^{\lambda_1} x_2^{\lambda_2}...$.
In \cite{Lascoux2001}, Lascoux defined $K$-theoretic deformations of the Demazure characters using modified operators that still satisfy the braid relations: 
\[ \tilde{\partial}_i = \partial_i(1 + \beta x_{i+1}) \hspace*{1in} \tau_i = \pi_i(1+\beta x_{i+1}) \hspace*{1in} \hat{\tau}_i = \tau_i -1. \]

Then the Lascoux polynomial is $\Omega_\gamma = \tau_{w(\gamma)} x^{\lambda(\gamma)}$ and the Lascoux atom is $\hat{\mathcal{L}}_\gamma = \hat{\tau}_{w(\gamma)} x^{\lambda(\gamma)}$.
By manipulating the operators above, we obtain the following decomposition of the Lascoux polynomial into Lascoux atoms that matches the Demazure case.

\begin{theorem}
 \[ \Omega_\delta = \sum_{\gamma \leq \delta} \hat{\mathcal{L}}_\gamma \] where $\gamma \leq \delta$ if $\lambda(\gamma) = \lambda(\delta)$ and $w(\gamma) \leq w(\delta)$ in Bruhat order.
\end{theorem}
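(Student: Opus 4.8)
The plan is to prove this by mirroring the corresponding Demazure identity $\kappa_\delta = \sum_{\gamma \leq \delta} A_\gamma$ at the level of the operators, pushing the same argument through the $\beta$-deformation. Concretely, I would first establish the operator identity
\[ \tau_w = \sum_{v \leq w} \hat{\tau}_v \]
in the algebra generated by the $\tau_i$, and then apply both sides to $x^{\lambda(\delta)}$, so that the left-hand side becomes $\Omega_\delta = \tau_{w(\delta)}x^{\lambda(\delta)}$ and the right-hand side reorganizes into $\sum_{\gamma \leq \delta} \hat{\mathcal{L}}_\gamma$.

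The identity $\tau_w = \sum_{v\leq w}\hat{\tau}_v$ depends only on three structural facts, each with a Demazure analogue: the braid relations for both families (already cited from Lascoux, so that $\tau_w$ and $\hat{\tau}_v$ are well defined via reduced words), the defining relation $\tau_i = \hat{\tau}_i + 1$, and a quadratic relation. The crucial observation is that $\tau_i$ is idempotent, $\tau_i^2 = \tau_i$; this is what makes the $\beta$-deformed situation formally identical to the Demazure one, since it forces $\hat{\tau}_i^2 = (\tau_i-1)^2 = -\hat{\tau}_i$, exactly the relation satisfied by $\hat{\pi}_i$. Granting these, I would prove the identity by induction on $\ell(w)$: writing $w = w's_i$ with $\ell(w)=\ell(w')+1$, expanding $\tau_w = \tau_{w'}(\hat{\tau}_i + 1)$, substituting the inductive expression for $\tau_{w'}$, and simplifying the products $\hat{\tau}_v\hat{\tau}_i$ using $\hat{\tau}_v\hat{\tau}_i = \hat{\tau}_{vs_i}$ when $vs_i > v$ and $\hat{\tau}_v\hat{\tau}_i = -\hat{\tau}_v$ when $vs_i < v$. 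This is the same bookkeeping over the Bruhat interval $[e,w]$ as in the Demazure proof.

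With the operator identity in hand, the second ingredient is the vanishing $\hat{\tau}_i x^{\lambda} = 0$ whenever $\lambda_i = \lambda_{i+1}$. This holds because $x^\lambda$ is then symmetric in $x_i, x_{i+1}$, so $\pi_i x^\lambda = x^\lambda$ and $\pi_i(x_{i+1}x^\lambda) = x^\lambda\,\pi_i(x_{i+1}) = 0$, giving $\tau_i x^\lambda = x^\lambda$ and hence $\hat{\tau}_i x^\lambda = 0$. To finish, I would apply the identity to $x^{\lambda(\delta)}$ and factor each $v \leq w(\delta)$ as $v = us$, where $u$ is the minimal-length representative of the coset $v\,W_{\lambda(\delta)}$ (with $W_{\lambda(\delta)}$ the stabilizer of $\lambda(\delta)$) and $s$ lies in $W_{\lambda(\delta)}$, with $\ell(v) = \ell(u) + \ell(s)$. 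Then $\hat{\tau}_v x^{\lambda(\delta)} = \hat{\tau}_u\hat{\tau}_s x^{\lambda(\delta)}$ vanishes unless $s = e$, by applying the vanishing to the last generator of a reduced word of $s$. The surviving terms are indexed by the minimal coset representatives $u \leq w(\delta)$; setting $\gamma = u\cdot\lambda(\delta)$ we have $u = w(\gamma)$, $\lambda(\gamma) = \lambda(\delta)$, and $\hat{\tau}_u x^{\lambda(\delta)} = \hat{\mathcal{L}}_\gamma$, so these are precisely the $\gamma \leq \delta$, yielding the claim.

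I expect the main obstacle to be the first step, and in particular verifying the quadratic relation $\tau_i^2 = \tau_i$ for $\tau_i = \pi_i(1+\beta x_{i+1})$. Everything downstream — the inductive proof of the operator identity and the parabolic bookkeeping identifying the surviving Bruhat interval with $\{\gamma : \gamma \leq \delta\}$ — is then a formal repetition of the Demazure argument, since $\tau_i, \hat{\tau}_i$ obey exactly the same relations as $\pi_i, \hat{\pi}_i$. A secondary point to check carefully is that the minimal-coset-representative factorization is compatible with the well-definedness of $\hat{\tau}_v$ on reduced words, i.e.\ that $\ell(v) = \ell(u)+\ell(s)$ genuinely yields $\hat{\tau}_v = \hat{\tau}_u\hat{\tau}_s$, so that the vanishing may be applied termwise.
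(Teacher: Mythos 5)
Your overall route coincides with the paper's. The paper cites A.~Pun's proof of the $\beta=0$ identity $\kappa_\delta = \sum_{\gamma\le\delta} A_\gamma$ --- whose skeleton is exactly what you describe: the operator identity $\pi_w = \sum_{v\le w}\hat{\pi}_v$, the induction over the Bruhat interval using $\hat{\pi}_v\hat{\pi}_i = \hat{\pi}_{vs_i}$ when $vs_i>v$ and $-\hat{\pi}_v$ otherwise, and the vanishing/parabolic-coset bookkeeping --- and then observes that this proof extends line by line to the $\tau$-operators once one knows $\hat{\tau}_i^2=-\hat{\tau}_i$. Your reconstruction of that skeleton is sound: the vanishing $\hat{\tau}_i x^\lambda = 0$ when $\lambda_i=\lambda_{i+1}$ holds by the argument you give, and the factorization $\hat{\tau}_v = \hat{\tau}_u\hat{\tau}_s$ is legitimate because lengths add, so reduced words concatenate.

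The genuine gap is that the one statement everything hinges on, the idempotence $\tau_i^2=\tau_i$, is never proved: you call it ``the crucial observation'' and then defer it as ``the main obstacle.'' But this relation is the sole non-formal ingredient --- it is, in fact, essentially the entire content of the paper's own proof, since the rest is delegated to Pun's $\beta=0$ argument. Nor is it automatic: $\pi_i$ does not commute with multiplication by $x_{i+1}$ (for instance $\pi_i x_{i+1}\cdot 1 = \partial_i(x_ix_{i+1}) = 0$ while $x_{i+1}\pi_i 1 = x_{i+1}$), so the product $\pi_i(1+\beta x_{i+1})\,\pi_i(1+\beta x_{i+1})$ cannot be collapsed without a commutation identity. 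The missing computation, as the paper gives it, is short: since $x_ix_{i+1}$ is symmetric in $x_i,x_{i+1}$,
\[ \pi_i x_{i+1} f = \partial_i(x_i x_{i+1} f) = x_i x_{i+1}\,\partial_i f, \]
and hence, using $\pi_i^2=\pi_i$ and $\partial_i\pi_i = \partial_i^2 x_i = 0$,
\[ \tau_i^2 = \pi_i^2(1+\beta x_{i+1}) + \beta\,\pi_i x_{i+1}\pi_i(1+\beta x_{i+1}) = \tau_i + \beta\, x_ix_{i+1}\,\partial_i\pi_i(1+\beta x_{i+1}) = \tau_i. \]
With this verification inserted, your argument is complete and is in substance the same as the paper's.
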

\begin{proof}
In sections 2 and 3 of \cite{Pun16}, A. Pun gives a proof in the case that $\beta = 0$ using relations derived between $\partial_i, \pi_i$, and $\hat{\pi}_i$. 
To extend this proof line by line to the Lascoux case, we only need to show $\hat{\tau}_i\hat{\tau}_i = -\hat{\tau}_i$.

We first show that $\tau_i\tau_i = \tau_i$.
To do this, consider $\pi_i x_{i+1}f = \partial_i (x_i x_{i+1} f)$.  Since $x_ix_{i+1}$ is symmetric in $i$ and $i+1$, 
\[ \pi_i x_{i+1}f = \partial_i (x_i x_{i+1} f) = x_ix_{i+1} \partial_i f. \]
Now, from Proposition 3.1 of \cite{Pun16}, $\pi_i^2 = \pi$ and $\partial_i \pi_i = 0$.
Then
\begin{align*}
\tau_i^2  &= (\pi_i(1 + \beta x_{i+1}))(\pi_i(1 + \beta x_{i+1})) \\ 
&= \pi_i^2(1 + \beta x_{i+1}) + \pi_i \beta x_{i+1} \pi_i ( 1 + \beta x_{i+1}) \\
&= \pi_i(1 + \beta x_{i+1}) + \beta x_i x_{i+1} \partial_i \pi_i ( 1 + \beta x_{i+1}) \\
&= \tau_i + 0.
\end{align*}

Since $\tau_i = 1 + \hat{\tau}_i$ and $\tau_i^2 = \tau_i$,  \[ 1 + \hat{\tau}_i = (1 + \hat{\tau}_i)^2 = 1 + 2\hat{\tau}_i + \hat{\tau}_i^2. \]
Thus \[ \hat{\tau}_i^2 = -\hat{\tau}_i. \]
\end{proof}

A conjectural combinatorial model for $\Omega_\gamma$ using $K$-Kohnert diagrams was given by C. Ross and A. Yong in \cite{RossYong2015}, but there are no proven combinatorial rules for $\Omega_\gamma$ or $\hat{\mathcal{L}}_\gamma$.
However, we have checked the following conjectures for all weak compositions $\gamma$ with at most 8 boxes and at most 8 rows, both which generalize the Demazure case.
\begin{conjecture}
\[ \hat{\mathcal{L}_\gamma} = \mathcal{L}_\gamma = \sum_{F \in {\sf SetSkyFill}(\gamma)} \beta^{|F|-|\gamma|} x^F. \] 
\label{conj:LasConj}
\end{conjecture}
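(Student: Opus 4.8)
The content of the statement is its first equality, since $\mathcal{L}_\gamma = \sum_{F}\beta^{|F|-|\gamma|}x^F$ holds by Definition~\ref{defn:lasAtom}; the real task is to identify the operator-theoretic atom $\hat{\mathcal{L}}_\gamma = \hat\tau_{w(\gamma)}x^{\lambda(\gamma)}$ with the combinatorial atom $\mathcal{L}_\gamma$. The plan is to induct on $\ell(w(\gamma))$, matching a base case and a one-step recurrence on both sides. Because $\hat\tau_i$ satisfies the braid relations together with $\hat\tau_i^2 = -\hat\tau_i$ (the immediately preceding Theorem), the operator $\hat\tau_{w(\gamma)}$ is independent of the chosen reduced word, so it suffices to verify the combinatorial recurrence along a single reduced word carrying $\lambda(\gamma)$ to $\gamma$.

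For the base case take $\gamma=\lambda$ a partition, so $w(\lambda)=e$ and $\hat{\mathcal{L}}_\lambda = x^\lambda$. On the combinatorial side I would show that ${\sf SetSkyFill}(\lambda)$ consists of the single ``diagonal'' filling with no free entries: the argument of Lemma~\ref{lem:atomLexLargest} forces row $i$ to contain only the value $i$, and then the no-repeat-in-a-column condition (S1) precludes any free entry, since a free entry $v<i$ in a box of row $i$ lies in a column already containing $v$ as the anchor of row $v$ (which reaches that column because $\lambda_v\ge\lambda_i$). Hence $\mathcal{L}_\lambda = x^\lambda = \hat{\mathcal{L}}_\lambda$.

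The inductive step is the heart. Writing $\hat\tau_i = \hat\pi_i + \beta\,\pi_i x_{i+1}$, I must establish
\[
\mathcal{L}_\gamma = \hat\tau_i\,\mathcal{L}_{s_i\gamma}\qquad\text{whenever } \gamma_i < \gamma_{i+1},
\]
the condition guaranteeing $\ell(w(\gamma)) = \ell(w(s_i\gamma))+1$ and $w(\gamma)=s_i\,w(s_i\gamma)$. The strategy is to construct an explicit operator on set-valued skyline fillings realizing $\hat\tau_i$, extending Mason's combinatorial description of $\hat\pi_i$ on ordinary skyline fillings \cite{Mason2008,Mason2009}. Here $\pi_i$ is the string (Demazure) operator that on a monomial $x_i^a x_{i+1}^b$ with $a\ge b$ returns $\sum_{c=b}^{a} x_i^{c}x_{i+1}^{a+b-c}$, realized on fillings by a Bender--Knuth/crystal-type redistribution of the $i$- and $(i+1)$-entries, while $\hat\pi_i=\pi_i-1$ discards the source term. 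The factor $\beta\,\pi_i x_{i+1}$ first adjoins an extra $(i+1)$ as a \emph{free} entry and then symmetrizes; this is exactly the mechanism producing the $\beta$-weighted set-valued fillings of $\gamma$ of larger free content from those of $s_i\gamma$. I would verify that the resulting objects satisfy (S1)--(S4) — in particular that adjoined free entries land in the highest admissible row demanded by (S4) — and that the construction is a content- and $\beta$-degree-preserving bijection onto ${\sf SetSkyFill}(\gamma)$ in the length-increasing direction.

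The main obstacle is the free-entry bookkeeping under these two operations. The anchor-entry backbone is governed by Mason's established action of $\hat\pi_i$, and the bijection $\hat\rho$ of Theorem~\ref{thm:rhoBijection} already shows that free entries are pinned down by the anchors together with their highest-row placement; the difficulty is to track how a free $(i+1)$ introduced by $\beta\,\pi_i x_{i+1}$ interacts with the string redistribution and with (S4), and to show that the sign cancellations forced by $\hat\tau_i^2=-\hat\tau_i$ match precisely the fillings absent on the combinatorial side. A global consistency check is furnished by Theorem~\ref{thm:GLasDecomp}: summing the recurrence over a fixed partition class should collapse, via a longest-element telescoping of the $\hat\tau_i$, to $G_\lambda=\sum_{\lambda(\gamma)=\lambda}\hat{\mathcal{L}}_\gamma$, which must agree with the proven $G_\lambda=\sum_{\lambda(\gamma)=\lambda}\mathcal{L}_\gamma$. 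While this aggregate agreement does not by itself separate individual atoms, it confirms compatibility and fixes the normalization, and it is the step I expect to demand the most delicate combinatorial control.
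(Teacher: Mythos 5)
You set out to prove a statement that the paper itself does not prove: it is stated as Conjecture \ref{conj:LasConj}, supported only by computer verification for all weak compositions with at most $8$ boxes and at most $8$ rows, and the paper explicitly remarks that there are no proven combinatorial rules for $\Omega_\gamma$ or $\hat{\mathcal{L}}_\gamma$. So the question is whether your argument closes that gap, and it does not. The parts you actually carry out are correct but peripheral: independence of the reduced word follows from the braid relations together with $\hat{\tau}_i^2 = -\hat{\tau}_i$ (the theorem proved immediately before the conjecture), the decomposition $\hat{\tau}_i = \hat{\pi}_i + \beta\,\pi_i x_{i+1}$ is algebraically right, and your base case $\mathcal{L}_\lambda = x^\lambda$ for a partition $\lambda$ is a complete little argument (the reasoning of Lemma \ref{lem:atomLexLargest} forces row $i$ to consist of $i$'s, and a free entry $v < i$ in column $c$ of row $i$ collides under (S1) with the anchor $v$ of row $v$, which reaches column $c$ because $\lambda_v \geq \lambda_i \geq c$).

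The inductive step, however, is the entire content of the conjecture, and you only announce it: you never construct the operator on ${\sf SetSkyFill}(s_i\gamma)$ realizing $\hat{\tau}_i$, never prove the claimed content- and $\beta$-degree-preserving bijection, and you yourself flag the free-entry/(S4) bookkeeping as the unresolved obstacle. Two concrete difficulties make this more than routine verification. First, even at $\beta = 0$ the recurrence $A_\gamma = \hat{\pi}_i A_{s_i\gamma}$ is not realized by a naive per-filling redistribution: applied to an individual monomial with $a < b$, $\hat{\pi}_i(x_i^a x_{i+1}^b)$ produces \emph{signed} terms, so one needs an $i$-string (crystal-type) decomposition of ${\sf SetSkyFill}(s_i\gamma)$ or a sign-reversing involution organizing the cancellations that $\hat{\tau}_i^2 = -\hat{\tau}_i$ encodes — you name this need but supply neither. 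Second, the term $\beta\,\pi_i x_{i+1}$ does not transparently mean ``adjoin a free $i{+}1$'': the adjoined entry can exceed the anchor of its box and force re-anchoring, and condition (S4) can then relocate other free entries across rows, so it is not clear the resulting multiset of fillings is exactly ${\sf SetSkyFill}(\gamma)$ with the right weights; this is precisely where small-rank experimentation (as in Figure \ref{fig:skylineTable}) shows the subtlety. Your global consistency check via Theorem \ref{thm:GLasDecomp} only constrains the sum over $\lambda(\gamma) = \lambda$ and, as you concede, cannot separate individual atoms. As it stands, the proposal is a plausible programme — essentially the one the conjecture invites — rather than a proof.
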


\begin{conjecture}
\[ \Omega_\gamma = \sum_F \beta^{|F| - |\gamma|} x^F \] where the sum runs over all semistandard set-valued skyline fillings of shape $\gamma^*$  and basement $b_i = n-i+1$.
\end{conjecture}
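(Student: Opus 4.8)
The plan is to reduce the conjecture to an explicit weight-preserving bijection and then to build that bijection on top of the tools already developed in the paper (relabeling the conjecture's index as $\delta$). Write $K_\delta := \sum_G \beta^{|G|-|\delta|}x^G$ for the claimed right-hand side, the sum being over semistandard set-valued skyline fillings $G$ of shape $\delta^*$ with basement $b_i=n-i+1$. Using the decomposition $\Omega_\delta=\sum_{\gamma\le\delta}\hat{\mathcal L}_\gamma$ established above, together with Conjecture~\ref{conj:LasConj} (which identifies $\hat{\mathcal L}_\gamma$ with the combinatorial atom $\mathcal L_\gamma=\sum_{F\in{\sf SetSkyFill}(\gamma)}\beta^{|F|-|\gamma|}x^F$), it is enough to prove the combinatorial identity $K_\delta=\sum_{\gamma\le\delta}\mathcal L_\gamma$. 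Every $\gamma\le\delta$ satisfies $\lambda(\gamma)=\lambda(\delta)$ and hence $|\gamma|=|\delta|$, so the two $\beta$-gradings coincide term by term, and the identity follows from a bijection
\[ \Phi\colon \bigsqcup_{\gamma\le\delta}{\sf SetSkyFill}(\gamma)\ \longrightarrow\ \bigl\{\text{set-valued skyline fillings of shape }\delta^*\text{ with }b_i=n-i+1\bigr\} \]
that preserves both the monomial $x^F$ and the total size $|F|$.

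I would construct $\Phi$ following the ``anchors first, free entries second'' philosophy of the map $\hat\rho$ in Theorem~\ref{thm:rhoBijection}. The anchor entries of any filling on either side form an ordinary (non-set-valued) skyline filling, so the anchor layer of $\Phi$ is the classical, $\beta=0$ correspondence underlying the two skyline models of the key polynomial: it is known from the work of Mason and of Haglund--Haiman--Loehr \cite{Mason2009,HagHaiLoe2008} that $\kappa_\delta$ is at once $\sum_{\gamma\le\delta}A_\gamma$, computed with the increasing basement $b_i=i$, and the generating function of ordinary skyline fillings of shape $\delta^*$ with the decreasing basement $b_i=n-i+1$. Concretely this correspondence keeps the entries of each column attached to their basement value and re-sorts the rows so that the basement decreases, the hypothesis $\gamma\le\delta$ being exactly what guarantees that the re-sorted diagram again satisfies (M1)--(M3); in particular the anchor map is column-content preserving. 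I then extend $\Phi$ to the free entries just as $\hat\rho$ extends $\rho$: with the multisets of anchor and free entries of each column fixed, reinsert every free entry into the highest box of its column in which it can remain free, as required by (S4).

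Well-definedness and invertibility then reduce, via the column-content-preserving property, to the column-local uniqueness already exploited in Theorem~\ref{thm:rhoBijection}: once the anchor and free entries of each column are prescribed, there is at most one semistandard set-valued filling of the given shape and basement---Mason's column-determinacy \cite{Mason2008} for the anchors, together with the forced ``highest admissible row'' placement of the free entries. The monomial and the size are preserved because $\Phi$ relocates entries without altering them, and the two families carry the same total multiset of entries column by column. The main obstacle is precisely the free-entry step across the change of basement: I must check that the (S4) placement is compatible with the re-sorting of rows, i.e.\ that a free entry admissible in the increasing-basement filling of some shape $\gamma$ has a unique highest admissible home in the decreasing-basement filling of shape $\delta^*$, and conversely. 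This should follow by the same bookkeeping as in the two ``well-defined on the free entries'' paragraphs of Theorem~\ref{thm:rhoBijection}: convert the chain $\max(T(1,j))>\cdots>\alpha$ into a count of anchor entries in a column exceeding a given free value, and verify that this count is invariant under the basement re-sort, so that the free entry lands in a legal box on both sides.

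Finally, to avoid any dependence on Conjecture~\ref{conj:LasConj}, one can instead prove $K_\delta=\Omega_\delta$ directly from the operator characterization $\Omega_\delta=\tau_{w(\delta)}x^{\lambda(\delta)}$. Here I would first verify the base case $K_\lambda=x^\lambda$ for weakly decreasing $\lambda$ by a forced-filling argument in the spirit of Lemma~\ref{lem:atomLexLargest} (the weakly increasing shape $\lambda^*$ under a strictly decreasing basement admits a unique filling, with no free entries and content $\lambda$), and then establish the recurrence $K_{s_i\delta}=\tau_i K_\delta$ for length-increasing swaps $\delta\mapsto s_i\delta$. Realizing $\pi_i$ as a local move on the two rows with basement values $n-i+1$ and $n-i$, and reading the factor $1+\beta x_{i+1}$ in $\tau_i=\pi_i(1+\beta x_{i+1})$ as the option of creating one extra free $(i+1)$, is the crux of this alternative; proving that this local move is a bijection accounting for all set-valued cells is where the real work lies.
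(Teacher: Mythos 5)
This statement is left as a conjecture in the paper (the author only reports computational verification for weak compositions with at most 8 boxes and 8 rows), so there is no proof of record to match; your proposal has to stand on its own, and it does not. Your primary route is conditional on Conjecture~\ref{conj:LasConj}, which is itself open, so at best it would establish an equivalence between two conjectures rather than the statement. More seriously, the unconditional combinatorial core fails as described: the anchor-layer map you propose --- keep each column's entries attached to their basement value and re-sort the rows so the basement decreases --- sends a filling of shape $\gamma$ with basement $b_i=i$ to a filling of shape $\gamma^*$ with basement $b_i=n-i+1$, \emph{not} to a filling of shape $\delta^*$. Since the conjecture's right-hand side sums over the single fixed shape $\delta^*$ while your source ranges over all $\gamma\le\delta$, every image with $\gamma\neq\delta$ lands in the wrong shape, and the claimed bijection $\Phi$ cannot exist in this form. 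The genuine classical correspondence behind $\kappa_\delta=\sum_{\gamma\le\delta}A_\gamma$ versus the decreasing-basement model is not a permutation of rows: with a large or decreasing basement the triple conditions admit strictly more fillings of the one shape $\delta^*$, and the decomposition into atom-sized pieces indexed by $\gamma\le\delta$ is governed by a nontrivial statistic (right keys, or Mason's $\rho$-type column sorting, which moves entries \emph{between} rows). Without a correct anchor skeleton, the ``free entries second'' extension in the style of Theorem~\ref{thm:rhoBijection} has nothing to attach to, and the column-count invariance you hope to verify is not even formulable.

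Your fallback operator route correctly identifies what an unconditional proof would need --- the base case $K_\lambda=x^\lambda$ plus the recurrence $K_{s_i\delta}=\tau_i K_\delta$ realized by a local move on the two rows with basement values $n-i+1$ and $n-i$, with the factor $1+\beta x_{i+1}$ accounting for one extra free entry --- but that recurrence is precisely the open content of the conjecture, and you construct neither the local move nor its bijectivity; saying this is ``where the real work lies'' concedes the point. A smaller correction: with the decreasing basement, the interaction of (S3)--(S4) with basement triples differs from the $b_i=i$ case (different triples become automatic), so even your base-case uniqueness claim requires its own argument rather than an appeal to Lemma~\ref{lem:atomLexLargest}, which is proved for the increasing basement. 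As it stands, the proposal is a plan whose two load-bearing steps --- the anchor bijection and the $\tau_i$-recurrence --- are respectively incorrect as stated and entirely missing.
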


In \cite{HLMvW2011}, J. Haglund, K. Luoto, S. Mason, and S. van Willigenburg refine the Littlewood-Richardson rule to give the expansion of $A_\gamma \cdot s_\lambda$ into Demazure atoms.  O. Pechenik and A. Yong \cite{PechYong16} develop the theory of genomic tableaux to describe multiplication in $K$-theory.  We conjecture the natural genomic analogue of the rule of J. Haglund et. al. extends to Lascoux atoms.

When $\delta, \gamma$ are weak compositions with $\gamma_i \leq \delta_i$ for all $i$, a \emph{skew skyline diagram} of shape $\delta/\gamma$ is formed by taking the skyline diagram of shape $\delta$ and given basement and extending the basement into the cells of $\gamma$. If $n$ is the largest entry allowed in the filling, a \emph{large basement} is such all basement entries of the basement are larger than $n$ and decrease from top to bottom.  
As seen in \cite{HLMvW2011}, with a large basement, the exact basement entries do not determine valid skyline fillings  and thus we denote it by $`*'$.

A \emph{genomic} filling is a filling of $\delta/\gamma$ with labels $i_j$ where $i$ is a positive integer and for each $i$, $\{ j | i_j \text{ appears in the filling} \} = \{ 1, ..., k_i \}$ for some nonnegative integer $k_i$.
The set of labels $\{ i_j \}$ for all $j$ is the \emph{family} $i$, while the set of all labels $i_j$ for fixed $i$ and $j$ is the \emph{gene} $i_j$.
The \emph{content} of a genomic filling is $(k_1,k_2,...)$.
The \emph{column reading word} of a skyline filling reads the entries of the boxes (excluding the basement) in columns from top to bottom, right to left.
A genomic filling is \emph{semistandard} if 
\begin{itemize}
\item[(G1)] at most one entry from a family (resp. gene) appears in a column (resp. row) 
\item[(G2)] the label families are weakly decreasing along rows,
\item[(G3)] every triple with three distinct genes is an inversion triple comparing families, and
\item[(G4)] for every $i$, the genes appear in weakly decreasing order along the reading word. 
\end{itemize}

A word is \emph{reverse lattice} if at any point and any $i$ we have always read more $i+1$s than $i$s.
A genomic filling is \emph{reverse lattice} if for any selection of exactly one label per gene, the column reading word is reverse lattice.

\begin{conjecture}
\[ \mathcal{L}_\gamma \cdot G_\lambda = \sum_\delta \tilde{a}_{\gamma,\lambda}^\delta \mathcal{L}_\delta \]
where $\tilde{a}_{\gamma,\lambda}^\delta$ is the number of reverse lattice, genomic semistandard skyline fillings of skew-shape $\delta/\gamma$ (using a large basement) with content $\lambda^*$.
\end{conjecture}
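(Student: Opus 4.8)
The plan is to prove this $K$-theoretic refinement by a weight-preserving bijection that models the product $\mathcal{L}_\gamma \cdot G_\lambda$ and sorts its terms by the straight shape $\delta$ of an output skyline filling, generalizing the Demazure-atom Littlewood--Richardson rule of \cite{HLMvW2011} and specializing to it at $\beta = 0$. By Definition \ref{defn:lasAtom} and Buch's theorem \cite{Buch2002}, the product is the generating function over pairs $(F,V)$ with $F \in {\sf SetSkyFill}(\gamma)$ and $V \in {\sf SetRT}(\lambda)$, weighted by $\beta^{(|F|-|\gamma|)+(|V|-|\lambda|)}x^F x^V$. The coefficients $\tilde a_{\gamma,\lambda}^\delta$ are to be realized as the fibers of a map sending each such pair to a semistandard set-valued skyline filling of some straight shape $\delta$ together with a genomic record on the skew shape $\delta/\gamma$, so that the straight-shape part accounts for the factor $\mathcal{L}_\delta$ and the record is exactly a reverse-lattice genomic semistandard skyline filling of content $\lambda^*$.

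First I would use the bijection $\hat\rho$ of Theorem \ref{thm:rhoBijection} to replace $F$ by a set-valued reverse tableau of the straight shape $\lambda(\gamma)$, so the pair $(F,V)$ becomes a pair of set-valued reverse tableaux of shapes $\lambda(\gamma)$ and $\lambda$. Next I would run the $K$-theoretic insertion underlying \cite{Buch2002} -- made reversible by the uncrowding bijection of Section 4 and refined by the genomic jeu-de-taquin of \cite{PechYong16} -- to merge this pair into a single set-valued reverse tableau $W$ of a straight shape $\nu$ with $\lambda(\gamma) \subseteq \nu$, together with a recording tableau on $\nu/\lambda(\gamma)$ that tracks, family by family and gene by gene, which cells of $V$ were inserted and into which rows. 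Finally I would apply $\hat\rho^{-1}$ to $W$ to return to $\bigcup_{\lambda(\delta)=\nu}{\sf SetSkyFill}(\delta)$ and transport the recording data along $\hat\rho^{-1}$ to a genomic filling of the skew skyline shape $\delta/\gamma$ over a large basement. As in \cite{HLMvW2011}, the large basement frees the first-column anchors so that every $\delta$ with $\lambda(\delta) = \nu$ is reachable and so that the exact basement values are irrelevant.

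The substance of the proof is to verify that this composite is a bijection onto the reverse-lattice genomic semistandard skyline fillings of content $\lambda^*$. Weight-preservation and the $\beta$-grading are immediate, since every step moves entries without altering them, exactly as in the proofs of Theorem \ref{thm:rhoBijection} and of the uncrowding bijection. The semistandard genomic conditions (G1)--(G4) should be the image under $\hat\rho$ of the column-strictness, row weak-decrease, inversion-triple, and gene-ordering constraints satisfied by the recording tableau of a $K$-theoretic insertion, while the reverse-lattice condition should be the skyline shadow of the Yamanouchi/lattice constraint that already governs the classical and $K$-theoretic Littlewood--Richardson coefficients. At $\beta = 0$ this recording tableau has no repeated genes within a family, so the genomic filling collapses to an ordinary Littlewood--Richardson skyline filling and the statement must reduce to the theorem of \cite{HLMvW2011}; checking this reduction is a useful consistency test for the bijection.

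I expect the main obstacle to be the compatibility of genomic insertion with the basement-dependent skyline structure: one must show that genomic jeu-de-taquin (or Hecke insertion) commutes with $\hat\rho$ and $\hat\rho^{-1}$ well enough that the inversion-triple condition (S3)/(M3), which has no symmetric-function analogue, is preserved under the extra bumping that $K$-theory introduces. The genes of a single family may be inserted into several different rows of the output skyline, and controlling this multiplicity -- proving that the recording data is genuinely a \emph{genomic} filling whose genes are weakly decreasing along the reading word (G4) and whose every one-label-per-gene reading word is reverse lattice, rather than merely a set-valued one -- is the crux. This is precisely the step for which the computational evidence strongly suggests the statement but a clean structural argument is still missing, and I would attack it by analyzing how a single genomic jeu-de-taquin slide acts on triples after conjugation by $\hat\rho$.
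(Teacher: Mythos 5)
The statement you are addressing is left as a \emph{conjecture} in the paper---it is supported only by computational checks (weak compositions with at most 8 boxes and at most 8 rows)---so there is no proof of record to compare against, and your proposal does not close the gap either. What you have written is a strategy, not a proof, and its load-bearing step is precisely the thing that does not yet exist: a $K$-theoretic insertion algorithm that merges a pair consisting of a set-valued reverse tableau of shape $\lambda(\gamma)$ and one of shape $\lambda$ into a single set-valued object of straight shape, together with a recording tableau that is provably a reverse-lattice \emph{genomic} semistandard skyline filling. You invoke ``the $K$-theoretic insertion underlying \cite{Buch2002}, made reversible by the uncrowding bijection of Section 4 and refined by the genomic jeu-de-taquin of \cite{PechYong16}'' as if these tools compose, but they do not in any known way: uncrowding maps a \emph{single} set-valued tableau to a pair (skew recording tableau, ordinary tableau) and is not a merging insertion; Buch's rule for products goes through a different combinatorial framework; and genomic jeu de taquin is defined for genomic tableaux of partition shape computing Grassmannian structure constants, not for skyline fillings with basements and inversion triples. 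Even at $\beta = 0$, the proof in \cite{HLMvW2011} rests on Mason's skyline analogue of Schensted insertion and its compatibility with the triple conditions; the set-valued analogue of that insertion is exactly what would have to be built, and you concede this yourself (``a clean structural argument is still missing'').

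Concretely, two steps would fail if one tried to execute the plan as written. First, the claim that ``weight-preservation and the $\beta$-grading are immediate'' presupposes the merging map exists and never creates or destroys entries; but in $K$-theoretic multiplication the whole point is that $|\delta|$ can exceed $|\gamma|+|\lambda|$, with genes of one family occupying several cells, so matching the $\beta$-power on the left, namely $\beta^{(|F|-|\gamma|)+(|V|-|\lambda|)}$, against genomic content $\lambda^*$ and the $\beta$-grading of $\mathcal{L}_\delta$ on the right is a nontrivial identity to prove, not an observation. Second, transporting the recording data ``along $\hat{\rho}^{-1}$'' is undefined: Theorem \ref{thm:rhoBijection} rearranges anchor and free entries column by column, and you give no argument that conditions (G1)--(G4), in particular the gene ordering (G4) along the column reading word and the reverse-lattice condition for \emph{every} selection of one label per gene, survive this transport---this is where the inversion-triple condition (S3), which has no counterpart on the tableau side, genuinely bites. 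Your plan is a reasonable research program, essentially the one the paper implicitly suggests by citing \cite{HLMvW2011} and \cite{PechYong16} side by side, but the conjecture remains open after it.
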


\begin{example}
$\tilde{a}_{102,21}^{314} = 2$ and the two witnessing fillings are
\begin{center}
$\begin{ytableau}
 *(gray) *  & 2_1 & 1_1 \\
 1_1 \\
 *(gray) *  & *(gray) *  & 2_1 & 2_2 \\
\end{ytableau}
\hspace*{1in}
\begin{ytableau}
 *(gray) *  & 2_1 & 1_1 \\
 2_1 \\
 *(gray) *  & *(gray) *  & 2_1 & 2_2 \\
\end{ytableau}$.
\end{center}
\end{example}

\section*{Acknowledgements}
I thank my advisor Alexander Yong for his initial question that inspired this project, his support and encouragment throughout, and his advice that greatly improved the exposition.  I also thank Steph van Willigenburg and Sarah Mason for helpful conversations.  I used SAGE extensively throughout my investigation and was supported by a NSF grant during the initial work.

\bibliography{AtomReferences}
\bibliographystyle{mybst}

\end{document}